\newtheorem{theorem}{Theorem}[section]
\newtheorem{lemma}[theorem]{Lemma}
\newtheorem{corollary}[theorem]{Corollary}
\numberwithin{equation}{section}
\newtheorem{proposition}[theorem]{Proposition}
\theoremstyle{definition}
\theoremstyle{remark}
\newtheorem{remark}[theorem]{Remark}
\begin{document}
\title[Reducing a class of induced representations]{Composition series of a class of induced representations built on discrete series}
\author{Igor Ciganovi\'{c}}
\email{igor.ciganovic@math.hr}
\address[Igor Ciganovi\'{c}]{Department of Mathematics\\University  
	                         of Zagreb\\
                             Bijeni\v{c}ka cesta 30\\ 10 000 Zagreb\\ Croatia}
\thanks{This work has been fully supported by Croatian Science Foundation under the
	project IP-2018-01-3628.}
\keywords{Classical group, composition series, discrete series, generalized principal representation p-adic field, Jacquet module}
\subjclass[2010]{Primary 22D30, Secondary 22E50, 22D12, 11F85}
\begin{abstract} 
We have determined composition series of a class of induced representations appearing in M{\oe}glin-Tadi\'c classification of discrete series. The result is further used to determine composition series of certain representations induced from Langlads quotients.
This should provide more information on decomposing standard representations as well as Jacquet modules of discrete series, which has applications in automorphic forms. 
\end{abstract}
\maketitle
\section{Introduction}
\label{intro}
As standard representations are used to classify irreducible representations, determining their composition series is an important, but hard problem.
Furthermore, a certain subclass of standard representations is an integral
part of M{\oe}glin-Tadi\'c classification of discrete series.
However, an attempt to decompose any member of that subclass, using intertwining operators, requires decomposition of  even more special subclass, also part of M{\oe}glin-Tadi\'c classification. In this paper we determine composition series of that special subclass.
As it is unbounded in the number of essentially square integrable representations of general linear groups, we belive that the decomposition provides a valuable information for general decomposition of standard representations. 
On the other hand, as strongly positive discrete series play fundamental role in M{\oe}glin-Tadi\'c classification of discrete series, and their Jacquet modules are determined by I. Mati\'c, our decomposition  gives a direct way of analysing Jacquet modules of a large class of discrete series without constructing them step by step.

To describe our results we introduce some notation. 
Fix a local non-archimedean field $F$ of characteristic different from two. 
If $\delta$ is an essentially square integrable representation of $GL(m_{\delta},F)$ (this defines $m_{\delta}$), where $m_{\delta}>0$, then there exists
an irreducible cuspidal unitary representation of $\rho$ of $GL(m_{\rho},F)$ (this defines $m_{\rho}$) and $x,y\in\mathbb{R}$, such that $y-x+1\in\mathbb{Z}_{\geq 0}$ and $\delta$ is a unique irreducible subrepresentation of the parabolically induced representation $\nu^y\rho\times \cdots \times \nu^x\rho$.
The set 
$\Delta=[\nu^x \rho,\nu^y \rho]=\{\nu^x \rho,...,\nu^y \rho\}$ is called segment.
Also, we denote $\delta(\Delta)=\delta$ and $e(\Delta)=\frac{x+y}{2}$.

Let $G_n$ be a symplectic or (full) orthogonal group    
having split rank $n$. Let $\tau$ be an irreducible tempered representation of $G_n$
and  $\Delta_1,...,\Delta_k$, sequence of segments such that
$e(\Delta_1)\geq \cdots \geq e(\Delta_k)>0$. The parabolically induced representation
\begin{equation} \label{langlands}
\delta(\Delta_{1})\times\cdots\times \delta(\Delta_k)\rtimes \tau
\end{equation}
is called a standard representation. 
It has a unique irreducible quotient, called the Langlands quotient. By Langlands classification, all irreducible representations can be described as Langlands quotients, with trivial data for irreducible tempered representations. 
We call a discrete series $\sigma_{sp}$ of $G_n$ strongly positive if it is cuspidal or for every embedding of form
\[
\sigma_{sp} \hookrightarrow \nu^{x_1}\rho_1\times \cdots \times \nu^{x_l}\rho_{l} \rtimes \sigma_{cusp}
\]
where $x_i\in \mathbb{R}$, $\rho_i$  is  an irreducible unitary cuspidal representation $GL(m_{\rho_i},F)$ for all 
$i=1,...,l\in \mathbb{Z}_{>0}$  and  an irreducible representation
$\sigma_{cusp}$ is a cuspidal representation of $G_{n'}$, for some $n'$, 
we have
\[
x_1>0,\ldots,x_{l}>0.
\]
By
M{\oe}glin-Tadi\'c classification,  discrete series, that are not strongly positive, can be described as subrepresentations of some induced  representations,
of the form
\begin{equation} \label{mt}
\delta(\Delta_{1})\times\cdots\times \delta(\Delta_k)\rtimes \sigma_{sp}
\end{equation}
with certain conditions on segments and
an additional parameter - $\epsilon$ function.

The main result of this paper is composition series of representation \eqref{mt} with respect to an additional condition: for all $i\neq j$ 
induced representations
$\delta(\Delta_{i}) \times\delta(\Delta_j)$ and
$\delta(\widetilde{\Delta_i}) \times \delta(\Delta_{j})$ 
are irreducible. This condition is not unnatural and it has been considered in \cite{tadic-family}, with also $\sigma_{sp}$ being cuspidal.
We have the following:
\begin{theorem} \label{prikaz}
Let $\sigma$ be a discrete series described by    
M\oe glin Tadi\'c classification
as a subrepresentation of the induced representation 
\[
\sigma \hookrightarrow
\delta(\Delta_{1})\times\cdots\times \delta(\Delta_k)\rtimes \sigma_{sp},
\]
where $\sigma_{sp}$ is a strongly positive discrete series and $\Delta_{1},\ldots ,\Delta_{k}$ are segments.

Assume that for all $i\neq j$ induced representations
$\delta(\Delta_{i}) \times\delta(\Delta_j)$ and
$\delta(\widetilde{\Delta_i}) \times \delta(\Delta_{j})$ 
are irreducible.
Then the induced representation is multiplicity one. All irreducible subrepresentations, there are $2^k$ of them, are discrete series extensions of
$\sigma_{sp}$.
 Denote $S=\{1,\ldots,k\}$. 
 In the appropriate Grothendieck group we have
 
\[
	\delta(\Delta_{1})\times\cdots\times \delta(\Delta_k)\rtimes \sigma_{sp}
	=
	\sum_{X\subseteq S} \ \ \ 
	\sum_{ \sigma'\hookrightarrow \prod_{j\in S\setminus X} \delta(\Delta_j)\rtimes \sigma_{sp}}
	Lang(\prod_{i\in X} \delta(\Delta_i)\rtimes \sigma')
\]
where  $\sigma'$ is used to denote an irreducible subrepresentation, also being a discrete series extension of $\sigma_{sp}$.
For every integer 
\(0\leq l \leq k\), let $V_l$ be an image of the intertwining operator
\[
\bigoplus_{\substack{X\subseteq S,\\ \text{card}(X)=l}} \ \ 
\bigoplus_{ \sigma'\hookrightarrow \prod_{j\in S\setminus X} \delta(\Delta_j)\rtimes \sigma} \ 
\left(\prod_{i\in X} \delta(\Delta_i)\rtimes \sigma' \right)
\longrightarrow
\prod_{i\in S} \delta(\Delta_i)\rtimes \sigma
\]
given by $(x_1,x_2,\ldots) \mapsto x_1+x_2+\cdots$. Thus 
$V:=V_{k}=\prod_{i\in S} \delta(\Delta_i)\rtimes \sigma$ and  
$V_0$ is a direct sum of  irreducible subrepresentations of 
$V_k$. 
We have a filtration
$\{0\}=V_{-1}\subseteq V_0\subseteq \ldots  \subseteq V_k$, 
where for every integer $0\leq l \leq k$ we have
\begin{equation*} 
	V_l/V_{l-1}\cong
	\bigoplus_{\substack{X\subseteq S,\\ \text{card}(X)=l}} \ \ 
	\bigoplus_{ \sigma'\hookrightarrow \prod_{j\in S\setminus X} \delta(\Delta_j)\rtimes \sigma} \ 
	Lang(\prod_{i\in X} \delta(\Delta_i)\rtimes \sigma' ).
\end{equation*}
The induced representation has length  \
$
\sum_{l\geq 0}\binom{k}{l}2^{k-l}=(1+2)^k=3^k.
$
\end{theorem}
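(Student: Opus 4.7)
The plan is induction on $k$, with the base case $k=0$ trivial. For the inductive step I would separately establish three ingredients: the $2^k$ discrete series subrepresentations comprising $V_0$, the intertwining operators defining the $V_l$, and the filtration structure itself.

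By M{\oe}glin-Tadi\'c classification, under the hypothesis that each $\delta(\Delta_i)\times\delta(\Delta_j)$ and $\delta(\widetilde{\Delta_i})\times\delta(\Delta_j)$ is irreducible for $i\neq j$, the $\epsilon$-function admits independent sign choices on each segment, giving exactly $2^k$ discrete series embedding into $V_k$, and more generally $2^{k-|X|}$ discrete series $\sigma'\hookrightarrow\prod_{j\in S\setminus X}\delta(\Delta_j)\rtimes\sigma_{sp}$ for each $X\subseteq S$. The intertwining operator attached to $(X,\sigma')$ is built from this embedding by induction in stages together with the freedom to reorder the $\delta(\Delta_i)$ (granted by the pairwise irreducibility hypothesis). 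Since $\prod_{i\in X}\delta(\Delta_i)\rtimes\sigma'$ is a standard module with unique irreducible quotient $Lang(\prod_{i\in X}\delta(\Delta_i)\rtimes\sigma')$, this Langlands quotient appears as a subquotient of the image.

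The count and multiplicity-one assertion would follow from an iterated application of Tadi\'c's structural formula for Jacquet modules. The irreducibility hypotheses prevent cancellation between terms indexed by distinct segments, so each pair $(X,\sigma')$ contributes exactly one composition factor, and Frobenius reciprocity pins down the embedding pattern uniquely. Summing, $\sum_{l=0}^{k}\binom{k}{l}2^{k-l}=3^k$.

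The main obstacle is to verify the filtration $\{0\}=V_{-1}\subseteq V_0\subseteq\cdots\subseteq V_k$ together with the identification of $V_l/V_{l-1}$. The inclusion $V_{l-1}\subseteq V_l$ would be obtained by showing, for each $X$ of size $l-1$ and each attached $\sigma'$, that $\sigma'$ itself embeds into $\delta(\Delta_i)\rtimes\sigma''$ for some $i\in S\setminus X$ and a discrete series $\sigma''$ attached to $S\setminus(X\cup\{i\})$, thereby factoring the $(l-1)$-level operator through an $l$-level one. Identifying $V_l/V_{l-1}$ then reduces to showing that the radical of each $l$-level standard module maps into $V_{l-1}$, leaving only the Langlands quotient in $V_l/V_{l-1}$; this radical containment, together with the independence of distinct $(X,\sigma')$ contributions in the quotient, is the core technical step, handled by a combination of Jacquet module bookkeeping and the inductive hypothesis.
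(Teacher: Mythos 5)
Your overall strategy — induction on $k$, intertwining operators giving the terms $V_l$, and a filtration whose graded pieces are Langlands quotients — is indeed the strategy of the paper, but your sketch has three genuine gaps in the parts carrying the actual work.

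First, the induction cannot be run on the statement as you have set it up, with $\sigma_{sp}$ fixed as the base. After peeling off one segment you are looking at $\prod_{i\in S\setminus\{l\}}\delta(\Delta_i)\rtimes\sigma^l$, where $\sigma^l$ is a \emph{discrete series extension} of $\sigma_{sp}$, not $\sigma_{sp}$ itself; the inductive hypothesis must therefore be the stronger statement (Theorem~\ref{glavni} of the paper) in which the ``bottom'' representation $\sigma$ is allowed to be any discrete series subrepresentation of some $\prod_{i\in Y}\delta(\Delta_i)\rtimes\sigma_{sp}$, and Theorem~\ref{prikaz} is only a corollary of that. Second, the multiplicity-one assertion is the crux, and your justification — ``the irreducibility hypotheses prevent cancellation'' — is not an argument and in fact attributes the result to the wrong hypothesis. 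The irreducibility of $\delta(\Delta_i)\times\delta(\Delta_j)$ does not by itself control the cuspidal supports appearing in $\mu^*(\sigma_{sp})$; one needs strong positivity of $\sigma_{sp}$ together with a specific Jacquet-module lemma (Proposition~\ref{jacquet-modul-strogih}) forcing, for each extremal index $r$, the indices $i_r,j_r$ in Tadi\'c's formula \eqref{komnozenje} to take only the allowed values, so that $\prod_{i\in X}\delta(\widetilde\Delta_i)\otimes\sigma'$ occurs exactly once in $\mu^*\bigl(\prod_{i\in S\cup Y}\delta(\Delta_i)\rtimes\sigma_{sp}\bigr)$. Third, you never actually say how the Grothendieck-group identity \eqref{glavna1} is to be obtained; in the paper it comes from factoring the long intertwining operator $\prod_i\delta(\Delta_i)\rtimes\sigma\to\prod_i\delta(\widetilde\Delta_i)\rtimes\sigma$ through $k$ partial intertwinings, computing the semisimplified kernel $K_l$ of each via Proposition~\ref{druga} and the inductive hypothesis, and summing the kernels (with multiplicities controlled by the previous point). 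Your idea of factoring the $(l-1)$-level operators through $l$-level ones to get $V_{l-1}\subseteq V_l$ is a reasonable alternative to the paper's multiplicity-one deduction, but it does not substitute for the missing computation of the composition series itself, and the ``radical containment'' you flag is precisely where the Grothendieck-group identity and multiplicity one are consumed, so neither can be left as an afterthought.
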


We note that this theorem is a special case of the class of induced representations that we actually decomposed in Theorem \ref{glavni}, see Corollary  \ref{mt-dekompozicija}. Also 
the case of an induction from two segments and $\sigma_{sp}$ cuspidal is solved in \cite{ciganovic}.

For the convenience, using assumptions as in Theorem \ref{prikaz}, we also give here an interesting consequence.
\begin{corollary}
Let 
${\pi \leq 
\delta(\Delta_{1})\times\cdots\times \delta(\Delta_k)\rtimes \sigma_{sp} }
$ 
be any irreducible subquotient. 
Taking possible contragredients of segments in  ${\{\Delta_i : i\in S\}}$, there exist segments 
$\{\Delta'_i : i\in S\}$, such that
\[
\pi 
\hookrightarrow
\delta(\Delta'_{1})\times\cdots\times \delta(\Delta'_k)\rtimes \sigma_{sp}.
\]
\end{corollary}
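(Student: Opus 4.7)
The approach is to combine the composition-series description from Theorem \ref{prikaz} with the subrepresentation form of the Langlands classification, and then commute GL factors using the irreducibility hypothesis.

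First, Theorem \ref{prikaz} identifies the irreducible subquotient $\pi$ with a Langlands quotient
\[ \pi \cong Lang\Bigl(\prod_{i\in X} \delta(\Delta_i)\rtimes \sigma'\Bigr) \]
for some $X\subseteq S$ and a discrete-series extension $\sigma'\hookrightarrow \prod_{j\in S\setminus X}\delta(\Delta_j)\rtimes \sigma_{sp}$. The hypothesis that $\delta(\Delta_i)\times \delta(\Delta_j)$ is irreducible for $i\neq j$ lets us permute the factors inside $X$ freely, so after reordering the data is in standard Langlands form.

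Next, by the subrepresentation characterization of the Langlands quotient we obtain
\[ \pi\hookrightarrow \prod_{i\in X}\delta(\widetilde{\Delta_i})\rtimes \sigma', \]
with the order of factors reversed and with contragredient segments. Composing with the embedding of $\sigma'$ and using transitivity of parabolic induction yields
\[ \pi\hookrightarrow \prod_{i\in X}\delta(\widetilde{\Delta_i})\times \prod_{j\in S\setminus X}\delta(\Delta_j)\rtimes \sigma_{sp}. \]

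It remains to rearrange the GL factors into the original $S$-indexed order. Setting $\Delta'_i=\widetilde{\Delta_i}$ for $i\in X$ and $\Delta'_i=\Delta_i$ otherwise, each pairwise commutation requires $\delta(\Delta'_i)\times \delta(\Delta'_j)$ to be irreducible. The cases $(\Delta_i,\Delta_j)$ and $(\widetilde{\Delta_i},\Delta_j)$ are given by hypothesis; the case $(\Delta_i,\widetilde{\Delta_j})$ reduces to the previous one by swapping $i$ and $j$, and $(\widetilde{\Delta_i},\widetilde{\Delta_j})$ follows from $(\Delta_i,\Delta_j)$ by passing to contragredients. Thus all commutations are legitimate and the desired embedding $\pi\hookrightarrow \delta(\Delta'_1)\times\cdots\times \delta(\Delta'_k)\rtimes \sigma_{sp}$ follows.

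I expect the main obstacle to be the bookkeeping in the last step --- verifying that all four pair-combinations are irreducible and that the reordering of factors is compatible with the Langlands-quotient presentation from the first step --- since each underlying operation (Langlands duality, exactness of parabolic induction, commutation of irreducible factors) is essentially classical.
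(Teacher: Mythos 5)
Your proof is correct and is clearly the route intended by the paper, which states this corollary without an explicit proof but sets up exactly this machinery in Theorem \ref{glavni}: there, the chain of intertwining operators exhibits
$Lang(\prod_{i\in X}\delta(\Delta_i)\rtimes\sigma')$ as a subrepresentation of $\prod_{i\in X}\delta(\widetilde{\Delta}_i)\rtimes\sigma'$, and combining with $\sigma'\hookrightarrow\prod_{j\in S\setminus X}\delta(\Delta_j)\rtimes\sigma_{sp}$ via transitivity of induction and then commuting GL factors gives precisely your embedding. Your verification of the four pair-combinations is the right bookkeeping: the two unstated cases follow from (C2) via contragredients and the fact that $\delta(\Delta')\times\delta(\Delta'')$ and $\delta(\Delta'')\times\delta(\Delta')$ have identical composition series, so nothing is missing.
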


\section{Preliminaries}
\label{sec-1}

Fix  a local non-archimedean field $F$ of characteristic different from two. As in \cite{tadic-diskretne} let $V_n$, $n\geq 0$ be a tower of symplectic or orthogonal non-degenerate $F$ vector spaces  where $n$ is the Witt index. We denote by $G_n$ the group of isometries of $V_n$, by Irr $G_n$ set of irreducible representations of $G_n$ and by
Irr$'$ the set $\cup_{n\in \mathbb{N} } \text{ Irr } G_n$.  
Group  $G_n$ has split rank $n$.  
Also, we fix a set of standard parabolic subgroups in the usual way.  
Standard parabolic proper subgroups of $G_n$ are in bijection
with the set of ordered  partitions of positive integers $m\leq n$. Given positive integers $n_1,...,n_k$ such that $m=n_1+\cdots+n_k\leq n$ the corresponding standard parabolic subgroup $P_s$, $s={(n_1,...,n_k)}$ has the Levi factor $M_s$ isomorphic to
\(
GL(n_1,F)\times \cdots \times GL(n_k,F)  \times G_{n-m}.
\)
So an irreducible representation $\pi$ of $M_s$ can be written as   
$\delta_1\otimes\cdots\otimes\delta_k\otimes \tau$ where
$\delta_i$ is an irreducible representation of $GL(n_i,F)$, $i=1,...,k$ and 
$\tau$ an irreducible representation of $G_{n-m}$.
We use the following notation for the normalized parabolic induction
$$
\delta_1\times\cdots\times\delta_k\rtimes \tau= 
\text{Ind}_{M_s}^{G_n}(\delta_1\otimes\cdots\otimes\delta_k\otimes \tau).
$$
If $\sigma$ is a smooth representation of $G_n$ we denote by 
$\text{r}_{s}(\sigma)=\text{r}_{M_s}(\sigma)=\text{Jacq}_{M_s}^{G_n}(\sigma)$
the normalized Jacquet module of $\sigma$. 
If $\text{r}_{M_s}(\sigma)$ is trivial for every proper standard parabolic subgroup $P_s$  then
$\sigma$ is said to be cuspidal.
We have Frobenius reciprocity
\[
\text{Hom}_{G_n}(\sigma,\text{Ind}_{M_s}^{G_n}(\pi))=
\text{Hom}_{M_s}(\text{Jacq}_{M_s}^{G_n}(\sigma),\pi).
\]

We recall some results about representations of general linear groups  from \cite{zelevinsky-ind-repns-II}.
Let $\rho$ be 
an irreducible cuspidal unitary representation of  $GL(m_{\rho},F)$ (this defines $m_{\rho}$) and $x,y\in\mathbb{R}$, such that $y-x+1\in\mathbb{Z}_{\geq 0}$. The set 
$\Delta =[\nu^x \rho,\nu^y \rho]=\{\nu^x \rho,...,\nu^y \rho\}$ is called segment.
The induced representation 
$\nu^y\rho\times \cdots \times \nu^x\rho$ has a unique irreducible subrepresentation. It is essentially square integrable, and we denote it by
$\delta(\Delta)$. We also denote 
$e(\Delta)=\frac{x+y}{2}$.
 For $y-x+1\in\mathbb{Z}_{< 0}$ define $[\nu^x \rho,\nu^y \rho]=\emptyset$ and let $\delta(\emptyset)$ be an irreducible representation of the trivial group.
Further, let  
$\widetilde{\Delta}=[\nu^{-y} \widetilde{\rho},\nu^{-x} \widetilde{\rho}]$ where $\widetilde{\rho }$ denotes the contragredient of $\rho$. We have 
$\delta(\Delta)\widetilde{\ }=\delta(\widetilde{\Delta})$.
If $\delta$ is an essentially square integrable representation of $GL(m_{\delta},F)$,
there exists a segment $\Delta$ such that $\delta=\delta(\Delta)$. Segments 
$\Delta'$ and $\Delta''$ are said to be linked if
$\Delta'\nsubseteq \Delta''$ and $\Delta''\nsubseteq \Delta'$ and 
$\Delta'\cup \Delta''$ is a segment. 
If they are 
linked, the induced representation $\delta(\Delta')\times \delta(\Delta'')$ is of 
length 2 and $\delta(\Delta'\cap \Delta'')\times \delta(\Delta'\cup \Delta'')$ is an irreducible subquotient.
Else $\delta(\Delta')\times \delta(\Delta'') \cong \delta(\Delta'')\times \delta(\Delta')$ is an irreducible representation.

Now we write Tadi\'c formula for computing Jacquet modules. Let $R(G_n)$ be the Grothendieck group of the category of smooth representations of $G_n$ of finite length. It is a free Abelian group generated by classes of irreducible representations of $G_n$.  If $\sigma$ is a smooth representation of a finite length of $G_n$, denote by $\text{s.s.}(\sigma)$ the semisimplification of $\sigma$, that is a sum of classes of composition factors of $\sigma$.
Put $R(G)=\oplus_{n\geq 0} R(G_n)$.
For $\pi_1,\pi_2\in R(G)$ we define $\pi_1 \leq \pi_2$ if  $\pi_2-\pi_1$ is a linear combination of classes of irreducible representations with non-negative coefficients.
 Similarly, let $R(GL)=\oplus_{n\geq 0}R(GL(n,F))$. We have the map 
 $\mu^* : R(G)\rightarrow R(GL) \otimes R(G)$ defined by
 \[
 \mu^*(\sigma)=1\otimes \sigma + \sum_{k=1}^{n} \text{s.s.}(r_{(k)}(\sigma)),\  \sigma \in R(G_n).
 \]
The following result derives from Theorems 5.4 and 6.5 of  \cite{tadic-structure}, see also Section 1.\ of \cite{tadic-diskretne}. They are based on Geometrical  Lemma (2.11 of \cite{bernstein-zelevinsky-ind-repns-I}).
\begin{theorem} Let $\sigma$ be a smooth representation of a finite length of $G_n$, $\rho$ an irreducible unitary cuspidal representation of $GL(m_{\rho},F)$ and  
$x,y\in\mathbb{R}$, such that $y-x+1\in\mathbb{Z}_{\geq 0}$.Then
\begin{equation} \label{komnozenje}
\begin{split}
\mu^*(&\delta([\nu^x\rho,\nu^y\rho])\rtimes \sigma)=
\sum_{\delta'\otimes\sigma'\leq \mu^*(\sigma)}
\sum_{i=0}^{y-x+1} \sum_{j=0}^{i}                                \\
&\delta([\nu^{i-y}\widetilde{\rho},\nu^{-x}\widetilde{\rho}])
\times
\delta([\nu^{y+1-j}\rho,\nu^{y}\rho])\times \delta'
\otimes
\delta([\nu^{y+1-i}\rho,\nu^{y-j}\rho])\rtimes \sigma'
\end{split}
\end{equation}
where $\delta'\otimes\sigma'$ denotes an irreducible subquotient in the appropriate Jacquet module.
\end{theorem}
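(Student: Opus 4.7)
The plan is to specialize Tadi\'c's general structural identity
\[
\mu^*(\pi \rtimes \sigma) = M^*(\pi) \rtimes \mu^*(\sigma),
\]
a reformulation of the Bernstein--Zelevinsky Geometrical Lemma in the classical-group setting, to the case $\pi = \delta([\nu^x\rho,\nu^y\rho])$. Here $M^* : R(GL) \to R(GL) \otimes R(GL)$ is the Tadi\'c operator $M^* = (m \otimes \mathrm{id}) \circ (\sim \otimes m^*) \circ s \circ m^*$, built from the Jacquet coproduct $m^*$ and the parabolic-induction product $m$ on $R(GL)$, the transposition $s$, and the contragredient involution $\sim$ (the last arising from the action of the long Weyl element on the $GL$-Levi when it meets the $G_n$-factor). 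The action $\rtimes$ on the right combines the left tensor factor of $M^*(\pi)$ with $\delta'$ via $\times$ in $GL$, and the right tensor factor with $\sigma'$ via $\rtimes$, summing over terms $\delta' \otimes \sigma' \leq \mu^*(\sigma)$.

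The computational core is then to evaluate $M^*$ on a segment representation. Iterating Zelevinsky's Jacquet-module identity
\[
m^*(\delta([\nu^x\rho,\nu^y\rho])) = \sum_{k=x-1}^{y} \delta([\nu^{k+1}\rho,\nu^y\rho]) \otimes \delta([\nu^x\rho,\nu^k\rho])
\]
once on $\delta([\nu^x\rho,\nu^y\rho])$ and then again on its second tensor factor yields a sum indexed by two cut points $x-1 \leq a \leq b \leq y$ which partition $[\nu^x\rho,\nu^y\rho]$ into three consecutive sub-segments. Applying $s$ swaps the outer two; $\sim \otimes m^*$ dualizes the leftmost piece, turning $\delta([\nu^x\rho,\nu^a\rho])$ into $\delta([\nu^{-a}\widetilde\rho,\nu^{-x}\widetilde\rho])$, and keeps the remaining two; finally $m \otimes \mathrm{id}$ multiplies leftmost and rightmost in $GL$. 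Reparameterizing by $i = y - a$ and $j = y - b$ translates the range into $0 \leq j \leq i \leq y-x+1$ and matches the three segments in the statement term-by-term. Combining with $\mu^*(\sigma)$ via $\rtimes$ then inserts $\times \delta'$ on the $GL$-side and $\rtimes \sigma'$ on the $G_n$-side, completing the identity.

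The only real difficulty is bookkeeping: one must verify that the iterated $m^*$ on a segment produces only three-piece partitions with multiplicity one, that the dualization $\sim$ is applied to the correct factor with conventions consistent with \cite{tadic-structure}, and that the reparameterization $(a,b) \leftrightarrow (i,j)$ lands on the exact indexing in the statement. Segment representations are especially favorable here because their iterated Jacquet modules are multiplicity-free tensors of further segments, so no combinatorial collapses or intermediate reducibility phenomena intervene, and the match with the stated formula becomes a direct translation.
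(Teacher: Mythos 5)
Your derivation is correct and is exactly the route the paper has in mind when it cites Theorems 5.4 and 6.5 of \cite{tadic-structure}: one invokes $\mu^*(\pi\rtimes\sigma)=M^*(\pi)\rtimes\mu^*(\sigma)$ with $M^*=(m\otimes 1)\circ(\sim\otimes m^*)\circ s\circ m^*$, evaluates $M^*$ on the segment via Zelevinsky's $m^*$-formula to get a sum over three consecutive sub-segments indexed by cut points $x-1\leq a\leq b\leq y$ (with the leftmost piece dualized), and reindexes by $i=y-a$, $j=y-b$ to land on the displayed formula with $0\leq j\leq i\leq y-x+1$. Your phrasing ``multiplies leftmost and rightmost'' reflects the fact that you first expand $m^*$ fully by coassociativity before applying $s$ and $\sim$, which is harmless and gives the same answer as the literal composition $(m\otimes 1)\circ(\sim\otimes m^*)\circ s\circ m^*$; the paper itself does not reprove this but simply quotes Tadi\'c.
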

We also note that in the apropriate Grothendieck group
\begin{equation} \label{kontra}
\delta([\nu^x\rho,\nu^y\rho])\rtimes \sigma=
\delta([\nu^{-y}\widetilde{\rho},\nu^{-x}\widetilde{\rho}])\rtimes \sigma.
\end{equation}

The M{\oe}glin-Tadi\'c  classification of discrete series (\cite{moeglin},\cite{tadic-diskretne}) sets up a bijection between classes of discrete series of $G_n, n\in \mathbb{N}$ and objects called admissible triples. The classification, written under the natural hypothesis, is now
unconditional, see page 3160 of \cite{ba}. We briefly recall the classification. 
Let $\sigma$ be a discrete series of $G_n$ for some $n\in \mathbb{N}$, $\rho$  an irreducible, unitarizable, self-dual cuspidal representation of $GL(m_{\rho},F)$ and $a$ a positive integer. The representation 
 \[
 \delta([\nu^{-(a-1)/2}\rho,\nu^{(a-1)/2}\rho])\rtimes \sigma
 \]
 is irreducible for all $a$ of one parity. For the other parity, the representation reduces except for a finite number of integres and their parity is determined only by $\rho$. We define 
$\text{Jord}(\sigma)$ as a set of all pairs $(a,\rho)$ that form such exceptions.  Also,  let
$\text{Jord}_{\rho}(\sigma)=\{ a\in \mathbb{N}\ |\ (a,\rho) \in \text{Jord} \}$.
Next,  we define a partial cuspidal support of $\sigma$, denoted by  
$\sigma_{cusp}$, 
 as a unique irreducible cuspidal representation of some  $G_{n'}$ such that 
there exists an irreducible representation $\pi$ of $GL(m_{\pi},F)$ with the property 
$\sigma \hookrightarrow \pi \rtimes \sigma_{cusp}$.

Now we define admissible triples. First consider a triple 
$(\text{Jord}, \sigma',\epsilon)$ described as follows. Jord is a finite set, possibly empty, of 
pairs $(a, \rho)$ where $\rho$ is an irreducible self-dual cuspidal representation of $GL(m_{\rho},F)$ and $a$ is a positive integer of an appropriate parity, explained above as the parity of exceptions. Next, $\sigma'$ is an irreducible cuspidal representation of $G_{n'}$ for some $n'\in\mathbb{N}$.
Finally, 
 $\epsilon$ is  a function from a subset of $\text{Jord} \cup (\text{Jord}\times \text{Jord})$ into  $\{\pm1\}$. 
It is defined on a pair $((a,\rho), (a',\rho'))\in \text{Jord}\times \text{Jord}$   if and only if $\rho\cong\rho'$ and $a\neq a'$. 
Further, $\epsilon$ is defined on $(a,\rho)\in \text{Jord}$ if and only if $a$ is even or $a$ is odd and $\rho\rtimes \sigma_{cusp}$ reduces. 
Following must hold:
\begin{itemize}
\item[$\bullet$] 
value of $\epsilon$ on a pair $((a,\rho),(a',\rho))$ is denoted by  
$\epsilon(a,\rho)\epsilon(a',\rho)^{-1}$ and it is equal to the product of 
 $\epsilon(a,\rho)$ and $\epsilon(a',\rho)^{-1}$ if they are defined,
\item[$\bullet$] 
$\epsilon(a,\rho)\epsilon(a'',\rho)^{-1}=
\big(\epsilon(a,\rho)\epsilon(a',\rho)^{-1}\big)
\big( \epsilon(a',\rho)\epsilon(a'',\rho)^{-1} \big)$,
\item[$\bullet$]
$\epsilon(a,\rho)\epsilon(a',\rho)^{-1}=\epsilon(a',\rho)\epsilon(a,\rho)^{-1}$.
\end{itemize}
Triple $(\text{Jord},\sigma',\epsilon)$ is said to be alternated if 
\begin{itemize}
\item
$\epsilon(a,\rho)\epsilon(a_{-},\rho)^{-1}\neq 1$
for all $(a,\rho) \in \text{Jord}$ such that there exists  \\
$(a_{-},\rho)\in \text{Jord}$ %, and also 
\item
for every $\rho$ appearing in Jord there exist an increasing bijection \\
${\Phi_{\rho} : \text{Jord}_{\rho} \rightarrow \text{Jord'}_{\rho}(\sigma_{cusp}) }$ where 
\[
\text{Jord'}_{\rho}(\sigma_{cusp})=
\begin{cases}
\text{Jord}_{\rho}(\sigma_{cusp}) \cup \{ 0\} &\text{if } a \text{ is even and } \epsilon(\text{min}(\text{Jord}_{\rho}),\rho)=1,\\
\text{Jord}_{\rho}(\sigma_{cusp})  &\text{else}.
\end{cases}
\]
\end{itemize}
Triple $(\text{Jord},\sigma',\epsilon)$ is said to be admissible if it can be reduced to an alternated triple in a finite number of steps by removing pairs such that
  $\epsilon(a,\rho)\epsilon(a_{-},\rho)^{-1}= 1$ and accordingly restricting 
the $\epsilon$ function.

Now the classification of discrete series can be stated as in 
Theorem 1.1 of \cite{muic-composition1series}.
\begin{theorem} \label{classification-discrete}
There exists a bijection between classes of discrete series $\sigma\in \text{Irr}'$ 
and all admissible triples $(\text{Jord},\sigma',\epsilon)$ 
denoted by
$$\sigma=\sigma_{(\text{Jord},\sigma',\epsilon)}$$
such that the following holds.
\begin{enumerate}
\item[$i)$] $\text{Jord}(\sigma)=\text{Jord}$ and $\sigma_{cusp}=\sigma'$.
\item[$ii)$] If a triple $(\text{Jord},\sigma',\epsilon)$ is alternated then
\begin{equation} \label{klasifikacija-strogih-tadic}
\sigma \hookrightarrow \prod_{i=1}^{m} \prod_{j=1}^{k_{\rho_i}}
\delta([\nu^{(\Phi_{\rho_i}(a_j^{\rho_i})+1)/2}\rho_i,\nu^{(a_j^{\rho_i}-1)/2}\rho_i])\rtimes \sigma_{cusp}
\end{equation}
is a unique irreducible subrepresentation,
 where $\{  \rho_1,...,\rho_m \}$ is a set of cuspidal representations appearing in Jord and every Jord$_{\rho_i}$ consists of 
${a^{\rho_i}_1 <\cdots < a^{\rho_i}_{k_{\rho_i}} }$.
\item[$iii)$] If $(a,\rho)\in \text{Jord}$ such that $(a_{-},\rho)\in \text{Jord}$
and $\epsilon(a,\rho)\epsilon(a_{-},\rho)^{-1}=1$ put 
\text{Jord}$''$=Jord$\setminus \{(a,\rho)(a_{-},\rho) \}$ and denote by
 $\epsilon''$ the restriction of $\epsilon$ on $\text{Jord}''$.
Then 
$$\sigma\hookrightarrow 
 \delta([\nu^{-(a_{-}-1)/2}\rho,\nu^{(a-1)/2}\rho])\rtimes 
\sigma_{(\text{Jord}'',\sigma',\epsilon'')}.
$$
Further, induced representation 
$
\delta([\nu^{-(a_{-}-1)/2}\rho,\nu^{(a_{-}-1)/2}\rho])\rtimes 
\sigma_{(\text{Jord}'',\sigma',\epsilon'')}
$ 
is a direct sum of of two non-equivalent representations $\tau_{\pm}$ and there
exist the unique $\tau\in \{ \tau_+, \tau_-\}$ such that
$$\sigma\hookrightarrow 
 \delta([\nu^{(a_{-}+1)/2}\rho,\nu^{(a-1)/2}\rho])\rtimes \tau.
$$
\end{enumerate}
\end{theorem}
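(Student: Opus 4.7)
\emph{Strategy.} The plan is to argue by induction on $k$, the case $k=0$ being trivial and $k=1$ recovered from part (iii) of Theorem~\ref{classification-discrete} combined with Langlands classification. The decisive consequence of the hypothesis that $\delta(\Delta_i)\times\delta(\Delta_j)$ and $\delta(\widetilde{\Delta_i})\times\delta(\Delta_j)$ are irreducible for $i\neq j$ is that in $\prod_{i\in S}\delta(\Delta_i)\rtimes\sigma_{sp}$ the $GL$-factors commute in every order, and by \eqref{kontra} any subset of them may be replaced by their contragredients without changing the isomorphism class. This built-in symmetry is what will permit the clean $2^k$-labelling of discrete series subrepresentations.

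\emph{The $2^k$ discrete series subrepresentations.} The embedding of $\sigma$ supplied by M\oe glin-Tadi\'c is only one of many: iterating part (iii) of Theorem~\ref{classification-discrete} on each of the $k$ Jordan pairs attached to the $\Delta_i$ (where $\epsilon(a_i,\rho_i)\epsilon(a_{i,-},\rho_i)^{-1}=1$), one may, index by index, shorten a full segment embedding to an embedding into $\delta([\nu^{(a_{i,-}+1)/2}\rho_i,\nu^{(a_i-1)/2}\rho_i])\rtimes\tau^{(i)}_{\pm}$ with a binary choice at each index. The irreducibility hypothesis allows the replacements at different indices to be performed independently, producing $2^k$ embeddings of pairwise inequivalent discrete series into $\prod_{i\in S}\delta(\Delta_i)\rtimes\sigma_{sp}$; inequivalence is read off from their Jordan triples, equivalently from the leading $GL$-part of their Jacquet modules computed via \eqref{komnozenje}.

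\emph{Filtration, length, and the main obstacle.} For each $X\subseteq S$ of size $l$ and each of the $2^{k-l}$ discrete series $\sigma'\hookrightarrow\prod_{j\in S\setminus X}\delta(\Delta_j)\rtimes\sigma_{sp}$ produced at the inductive step, transitivity of induction together with the permutability of $GL$-factors supplies a standard module $\prod_{i\in X}\delta(\Delta_i)\rtimes\sigma'$ whose image under the intertwining operator into $V_k$ contributes the Langlands quotient $\mathrm{Lang}(\prod_{i\in X}\delta(\Delta_i)\rtimes\sigma')$ at the top of its Jordan-H\"older filtration. Taking $V_l$ to be the sum of all such images over sets of cardinality at most $l$, the nesting $V_{l-1}\subseteq V_l$ is automatic, and the count $\sum_{l=0}^{k}\binom{k}{l}2^{k-l}=3^k$ matches the asserted length. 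The principal difficulty is proving that these $3^k$ Langlands quotients \emph{exhaust} the composition series and occur with multiplicity one: for this one must analyse $\mu^*(\prod_{i\in S}\delta(\Delta_i)\rtimes\sigma)$ iteratively via \eqref{komnozenje}, showing that under the irreducibility hypothesis the vast majority of terms in the double inner sum collapse so that every irreducible $GL\otimes G$-constituent of the Jacquet module appears in exactly one of the listed candidates. Frobenius reciprocity will then force multiplicity-freeness and completeness, after which the quotients $V_l/V_{l-1}$ come out as asserted by a dimension count against the filtration level.
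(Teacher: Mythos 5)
Your proposal addresses the wrong statement. Theorem~\ref{classification-discrete} is the M\oe glin--Tadi\'c classification of discrete series of classical $p$-adic groups; the paper does not prove it but simply records it (``Now the classification of discrete series can be stated as in Theorem 1.1 of \cite{muic-composition1series}''), the underlying result being due to M\oe glin and M\oe glin--Tadi\'c (\cite{moeglin}, \cite{tadic-diskretne}), with its unconditionality settled as noted via \cite{ba}. It is a foundational structure theorem, not something established within this paper.

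What you have sketched is instead an outline for the paper's \emph{main} theorem (Theorem~\ref{prikaz}, proved as Theorem~\ref{glavni}): the induction on $k$, the $2^k$ discrete series subrepresentations, the filtration $V_l$, the length count $\sum_l\binom{k}{l}2^{k-l}=3^k$, and the $\mu^*$ Jacquet-module multiplicity analysis via~\eqref{komnozenje} are all features of that result, not of the classification statement. Indeed your own argument invokes ``part (iii) of Theorem~\ref{classification-discrete}'' as a tool at the base step $k=1$, which would be circular if this were a proof of Theorem~\ref{classification-discrete} itself. There is therefore nothing here that engages with the actual content of the statement you were assigned: no Langlands-parameter construction, no admissible-triple combinatorics, no exhaustiveness argument for Jord, no verification of the $\epsilon$-function axioms. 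Either re-aim the proposal at Theorem~\ref{prikaz}/\ref{glavni}, where the strategy you describe is broadly appropriate, or, if the assignment really is Theorem~\ref{classification-discrete}, acknowledge that it is a cited external theorem and that a self-contained proof is far beyond the scope of a short note.
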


 Given the correspondance we also denote $\epsilon$ by  $\epsilon_{\sigma}$. We provide more details on that function, see Theorem 1.3 of \cite{tadic-tempered}.

\begin{theorem} \label{tadic-klasifikacija}
Suppose that $(a,\rho)\in\text{Jord}$ and one of the following 
\begin{enumerate}
\item $a_ {-}$ is defined. Then 

$\epsilon_{\sigma}(a,\rho)\epsilon_{\sigma}(a_{-},\rho)^{-1}=1$ if and only if there exists a representation  $\pi'$ of some  $G_{n_{\pi'}}$ such that
\[
\sigma \hookrightarrow \delta([\nu^{(a_{-}+1)/2} \rho,\nu^{(a-1)/2} \rho])  \rtimes \pi'.
\]
%or equivalently 
%\[\delta([\nu^{(a_{-}+1)/2} \rho,\nu^{(a-1)/2} \rho])  \otimes \pi'\]
%is a subquotient in the appropriate Jacquet module of $\sigma$.

\item $a$ is even and $a=\text{min}(\text{Jord}_{\rho})$. Then

$\epsilon_{\sigma}(a,\rho)=1$ if and only if 
 there exists a representation  $\pi'$ of some  $G_{n_{\pi'}}$
   such that  
 \[ 
   \sigma \hookrightarrow \delta([\nu^{1/2} \rho,\nu^{(a-1)/2} \rho])  \rtimes \pi'.
\]
%or equivalently
%\[\delta([\nu^{1/2} \rho,\nu^{(a-1)/2} \rho])  \otimes \pi'\]
%is a subquotient in the appropriate Jacquet module of $\sigma$.
\item
$\rho\rtimes \sigma_{cusp}$ 
reduces and $a=\text{max}(\text{Jord}_{\rho}(\sigma))$.

Then there exist two irreducible nonequivalent tempered representations such that
$\rho\rtimes \sigma_{cusp}=\tau_{-1}\oplus \tau_{1}$. Here, a choice of index is made and we have the classification with respect to it. For any $k\in \mathbb{Z}_{>0}$ the representation $\delta([\nu\rho,\nu^k\rho])\rtimes \tau_i$, $i\in \{\pm 1\}$ has a unique irreducible subrepresentation denoted by
\[
\delta([\nu\rho,\nu^k\rho]_{\tau_i};\sigma_{cusp}).
\]
We have $\epsilon_{\sigma}(a,\rho)=i$ if and only if there exists an irreducible representation $\theta$ of $GL(m_{\theta},F)$ such that
\[
\sigma \hookrightarrow 
\theta \rtimes \delta([\nu\rho,\nu^{(a-1)/2}\rho]_{\tau_i};\sigma_{cusp}).
\]
\end{enumerate}
\end{theorem}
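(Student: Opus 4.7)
The plan is to establish each of the three equivalences by combining the inductive structure built into Theorem \ref{classification-discrete}, the Jacquet module calculus supplied by the Tadi\'c formula \eqref{komnozenje}, and Frobenius reciprocity. In every case, the forward direction (``$\epsilon_\sigma$ has the prescribed value $\Rightarrow$ a certain embedding exists'') is essentially a consequence of the explicit embeddings written into Theorem \ref{classification-discrete}; the substantive content lies in the converse direction, where the existence of an embedding of the prescribed shape must be shown to force $\epsilon_\sigma$ to take the indicated value.

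For part (1), the implication $\epsilon_\sigma(a,\rho)\epsilon_\sigma(a_{-},\rho)^{-1}=1 \Rightarrow \sigma \hookrightarrow \delta([\nu^{(a_{-}+1)/2}\rho,\nu^{(a-1)/2}\rho]) \rtimes \pi'$ is exactly the second embedding recorded in Theorem \ref{classification-discrete}(iii), with $\pi' = \tau$. For the converse, I would argue by contradiction: assume such an embedding exists and that $\epsilon_\sigma(a,\rho)\epsilon_\sigma(a_{-},\rho)^{-1}=-1$. Frobenius reciprocity converts the embedding into a statement that $\delta([\nu^{(a_{-}+1)/2}\rho,\nu^{(a-1)/2}\rho]) \otimes \pi'$ appears in the corresponding Jacquet module of $\sigma$. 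Running the Tadi\'c formula \eqref{komnozenje} through an alternated realization of $\sigma$ (guaranteed by Theorem \ref{classification-discrete}(ii) after a finite sequence of reductions with $\epsilon$-product equal to $+1$) allows one to enumerate the possible GL-parts arising from that alternated form. The alternating pattern of $\Phi_\rho$ forced by $\epsilon_\sigma(a,\rho)\epsilon_\sigma(a_{-},\rho)^{-1}=-1$ then rules out the appearance of this particular GL-part, producing the contradiction.

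Parts (2) and (3) follow the same paradigm, but with the distinguished pair $(a,\rho),(a_{-},\rho)$ replaced by a single extremal Jordan block. In (2), the role of the reduction step is played by the case of Theorem \ref{classification-discrete}(ii) where $a = \min(\text{Jord}_\rho)$ is even and $\Phi_\rho(a)=0$, which yields the embedding starting from $\delta([\nu^{1/2}\rho,\nu^{(a-1)/2}\rho])$; the converse again uses the Jacquet module analysis described above, with the added input that $a=\min(\text{Jord}_\rho)$ determines the starting point of the embedding chain uniquely. In (3), where $\rho \rtimes \sigma_{cusp}$ reduces into $\tau_{-1} \oplus \tau_1$, one must additionally track which summand sits at the bottom of the embedding tower. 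The key observation is that iterating $r_{(m_\rho)}$ on $\sigma$ down to the partial cuspidal support $\sigma_{cusp}$ produces a tempered representation isomorphic to exactly one of $\tau_{\pm 1}$, and the index is the value $\epsilon_\sigma(a,\rho)$; the $\theta$-embedding in the statement records precisely this information.

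The shared obstacle across all three parts is the converse direction: one must show that a single embedding of prescribed shape already \emph{pins down} a global invariant of $\sigma$. Since admissible triples are only defined up to a two-step reduction procedure (alternated triples plus removal of cancelling pairs), many a priori embeddings of $\sigma$ exist, and one must isolate those that genuinely witness the value of $\epsilon_\sigma$ from those that merely reflect later reduction steps or interactions with unrelated Jordan blocks. I expect the cleanest route to be a double induction: an outer induction on $|\text{Jord}(\sigma)|$ that peels off reductions via Theorem \ref{classification-discrete}(iii), and an inner induction on $\sum_{(a,\rho)\in\text{Jord}}a$ that controls the Jacquet module bookkeeping via \eqref{komnozenje}, using irreducibility of the generalized principal series appearing at the cuspidal-reducibility level (decisive for (3)) to preclude unwanted embeddings.
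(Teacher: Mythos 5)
The paper does not prove this theorem: it is quoted verbatim, with the sentence preceding the statement explicitly pointing the reader to Theorem~1.3 of \cite{tadic-tempered}. There is therefore no ``paper's own proof'' against which to compare your attempt, and the expected reference proof lives in the M{\oe}glin--Tadi\'c framework developed in \cite{moeglin}, \cite{tadic-diskretne} and \cite{tadic-tempered}, not in this article.

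As for your sketch on its own terms: the forward directions you identify correctly. In part~(1) the embedding is the second one recorded in Theorem~\ref{classification-discrete}(iii), and in part~(2) the embedding is read off from \eqref{klasifikacija-strogih-tadic} in the alternated case with $\Phi_\rho(a)=0$, after reducing a general admissible triple to an alternated one via the moves in Theorem~\ref{classification-discrete}(iii). The substantive gap is exactly where you place it, namely the converse direction, but the sketch does not actually carry it out. Phrases such as ``the alternating pattern of $\Phi_\rho$ then rules out the appearance of this particular GL-part'' and ``I expect the cleanest route to be a double induction'' are placeholders for the hardest part of the argument: one must show that \emph{every} chain of reductions realizing $\sigma$ as in Theorem~\ref{classification-discrete}(iii), and not just one convenient chain, produces or fails to produce the required Jacquet module constituent, and that the $\epsilon$-value is independent of the choice of chain. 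For part~(3) the assertion that ``iterating $r_{(m_\rho)}$ on $\sigma$ down to $\sigma_{cusp}$ produces a tempered representation isomorphic to exactly one of $\tau_{\pm 1}$'' is not justified in the sketch; the Jacquet module at that level is a sum of many pieces, and isolating the distinguished tempered one, then identifying its index with $\epsilon_\sigma(a,\rho)$, is precisely the content of the cited theorem. So the plan is a reasonable orientation toward a proof, but as written it defers the entire converse direction, which is the theorem; for a correct treatment one should cite \cite{tadic-tempered} as the paper does.
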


Discrete series $\sigma$ that correspond to alternated triples are called strongly positive discrete series. They can be characterized as follows (see Section 1 of \cite{moeglin}, Proposition 7 of \cite{tadic-diskretne} and Proposition  1.1 of  \cite{muic-composition1series}).
\begin{proposition} Let $\sigma \in \text{Irr } G_n$.
Then $\sigma$ is a discrete series that corresponds to the triple of alternated type 
if and only if  
for every embedding of form 
\[
\sigma \hookrightarrow \nu^{x_1}\rho'_1\times \cdots \times \nu^{x_k}\rho'_{k'} \rtimes \sigma'_{cusp}
\]
where $x_i\in \mathbb{R}$, $\rho_i \in \text{Irr }GL(m_{\rho_i},F)$ (this defines $m_{\rho_i}$) is  unitary cuspidal representation
$i=1,...,k'\in \mathbb{Z}_{>0}$  and  
$\sigma'_{cusp}\in \text{Irr }G_{n'}$ for some $n'$, is a cuspidal representation,
we have
\[
x_1>0,\ldots,x_{k'}>0.
\]
\end{proposition}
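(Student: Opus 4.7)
The plan is to establish both directions of the equivalence using the M\oe{}glin--Tadi\'c classification (Theorem~\ref{classification-discrete}), Tadi\'c's formula \eqref{komnozenje}, and Frobenius reciprocity.

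\emph{Alternated $\Rightarrow$ strongly positive.} Suppose $\sigma=\sigma_{(\mathrm{Jord},\sigma_{cusp},\epsilon)}$ with the triple alternated. Theorem~\ref{classification-discrete}(ii) supplies the embedding \eqref{klasifikacija-strogih-tadic}. Because the triple is alternated, each $\Phi_{\rho_i}$ takes values in $\mathrm{Jord}'_{\rho_i}(\sigma_{cusp})\subseteq\mathbb{Z}_{\geq 0}$, so every left endpoint $(\Phi_{\rho_i}(a_j^{\rho_i})+1)/2$ is at least $1/2$. Inducing each $\delta(\Delta)$ in stages, we first obtain at least one embedding of $\sigma$ into a representation of the form $\nu^{x_1}\rho_1\times\cdots\times\nu^{x_l}\rho_l\rtimes\sigma_{cusp}$ with every $x_i>0$. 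To upgrade this to \emph{every} embedding, take an arbitrary embedding $\sigma\hookrightarrow\nu^{x_1}\rho'_1\times\cdots\times\nu^{x_{k'}}\rho'_{k'}\rtimes\sigma'_{cusp}$; by Frobenius reciprocity it corresponds to an irreducible quotient $\nu^{x_1}\rho'_1\otimes\cdots\otimes\sigma'_{cusp}$ of the minimal-parabolic Jacquet module $r_{\min}(\sigma)$. I would then compute $r_{\min}(\sigma)$ from \eqref{klasifikacija-strogih-tadic} by iterated application of \eqref{komnozenje}, and verify combinatorially that every surviving irreducible factor inherits strict positivity of the exponents $x_i$ from the positivity of the left endpoints of the segments in \eqref{klasifikacija-strogih-tadic}.

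\emph{Strongly positive $\Rightarrow$ alternated.} I argue by contrapositive. Suppose the admissible triple attached to $\sigma$ is not alternated. Because admissibility requires reducibility to an alternated triple by successively removing pairs $(a,\rho),(a_-,\rho)\in\mathrm{Jord}$ with $\epsilon(a,\rho)\epsilon(a_-,\rho)^{-1}=1$, and since the given triple is not already alternated, at least one such pair must exist. Theorem~\ref{classification-discrete}(iii) then yields
\[
\sigma\hookrightarrow\delta([\nu^{-(a_--1)/2}\rho,\nu^{(a-1)/2}\rho])\rtimes\sigma'',
\]
with $\sigma''$ the discrete series attached to the reduced triple. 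Embedding the segment representation into $\nu^{(a-1)/2}\rho\times\cdots\times\nu^{-(a_--1)/2}\rho$ and inductively embedding $\sigma''$ into a standard form produces an embedding of $\sigma$ featuring the exponent $-(a_--1)/2\leq 0$, contradicting strong positivity.

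The hard part will be the \emph{every embedding} clause of the forward direction: the careful combinatorial bookkeeping of all irreducible subquotients of $r_{\min}(\sigma)$ produced by iterated \eqref{komnozenje}. Na\"ively, the general linear factors of the form $\delta([\nu^{i-y}\widetilde{\rho},\nu^{-x}\widetilde{\rho}])$ on the left-hand side of \eqref{komnozenje} introduce exponents that can be non-positive, so one must show that, for the specific segments arising from \eqref{klasifikacija-strogih-tadic}, the positivity of left endpoints together with the increasing nature of each $\Phi_{\rho_i}$ forces any such negative-exponent contribution to be absorbed or to vanish, leaving only factors with strictly positive $x_i$.
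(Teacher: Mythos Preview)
The paper does not supply a proof of this proposition; it is stated with references to M{\oe}glin, Tadi\'c, and Mui\'c (Section~1 of \cite{moeglin}, Proposition~7 of \cite{tadic-diskretne}, Proposition~1.1 of \cite{muic-composition1series}). So there is no in-paper argument to compare against, and your proposal must be judged on its own.

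Your backward direction is essentially correct, with one omission: the contrapositive you run assumes $\sigma$ is already a discrete series (so that it has an admissible triple). To complete the implication you must also show that the strong positivity condition forces $\sigma$ to be square-integrable in the first place; this follows from Casselman's square-integrability criterion, but you should say so.

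The forward direction has a genuine gap. Formula~\eqref{komnozenje} computes $\mu^*$ of the \emph{full} induced representation $\prod\delta(\Delta)\rtimes\sigma_{cusp}$ in \eqref{klasifikacija-strogih-tadic}, not of its subrepresentation $\sigma$. The full induced representation has other irreducible subquotients (for instance its Langlands quotient), and their Jacquet modules \emph{do} contribute factors with non-positive exponents: already for a single segment $[\nu^x\rho,\nu^y\rho]$ with $x\geq 1/2$, the terms $\delta([\nu^{i-y}\widetilde\rho,\nu^{-x}\widetilde\rho])$ in \eqref{komnozenje} carry $\nu^{-x}\rho$ with $-x<0$ whenever $i\leq y-x$. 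These contributions do not ``vanish'' or get ``absorbed'' in $\mu^*(\text{induced})$; they simply do not lie in $\mu^*(\sigma)$. So your plan, as stated, cannot conclude that every constituent of $r_{\min}(\sigma)$ has positive exponents. What is actually needed is a determination of which Jacquet-module constituents belong to $\sigma$ itself---precisely the computation carried out in \cite{matic-jacquet} and invoked in the paper---or an alternative argument (e.g.\ an induction on the number of segments exploiting the uniqueness of $\sigma$ as subrepresentation) that isolates $\mu^*(\sigma)$ inside $\mu^*(\text{induced})$.
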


Now we want to prove a usefull fact about Jacquet modules of strongly positive discrete series. We note that they are calculated in  
\cite{matic-jacquet}.

\begin{proposition} \label{jacquet-modul-strogih}
Let $\sigma$ be a strongly positive representation and $2b+1<2c+1$  positive integers  of the same parity as numbers in Jord$_{\rho}$ for some 
$\rho$ appearing in Jord and 
$[2b+1,2c+1]\cap \text{Jord}_{\rho}=\emptyset$. Suppose that 
$\delta'\otimes \sigma'$ is an irreducible sumand in $\mu^*(\sigma)$ such that 
for some integer $2t+1\in [2b+1,2c+1]$, of the same parity as $2b+1$,
$\nu^t\rho$ is in cuspidal support of $\delta'$. 
Then there exists 
$a \in \text{Jord}_{\rho}$,  such that 
$c < (a-1)/2$ 
and 
$\nu^{(a-1)/2}\rho$ 
is in cuspidal support of $\delta'$.

\begin{proof}
We apply formula \eqref{komnozenje} on induced representation
in \eqref{klasifikacija-strogih-tadic}. Let $\rho=\rho_{i_0}$ for some $1\leq i_0 \leq m$.
To shorten the notation let us write 
$x_i^j=\frac{\Phi_{\rho_i}(a_j^{\rho_i})+1}{2}$  
and 
$y_i^j=\frac{a_j^{\rho_i}-1}{2}$
for 
$1\leq j\leq k_{\rho_i} , 1\leq i\leq m$.
Thus there exist indices
\[0 \leq s_i^j \leq r_i^j\leq 
 y_i^j - x_i^j +1, 
\text{ for }
1\leq j\leq k_{\rho_i} , 1\leq i\leq m, \]
such that
\begin{equation} \label{jacquet-modul-strogih-jednadzba}
\delta' \leq \prod_{i=1}^{m} \prod_{j=1}^{k_{\rho_i}}
\delta([\nu^{r_i^j-y_i^i}\rho_i,\nu^{-x_i^j}\rho_i])
\times
\delta([\nu^{y_i^j+1-s_i^j}\rho_i,\nu^{y_i^j}\rho_i]) \text{\ \ \  and }
\end{equation}
\begin{equation}
\sigma' \leq \delta([\nu^{y_i^j+1-r_i^j}\rho_i,\nu^{y_i^j-s_i^j}\rho_i])\rtimes \sigma_{cusp}.
\end{equation}
As all $x_i^j$ and $y_i^j$ are positive numbers and $\sigma$ is strongly positive discrete series, we have $r_i^j=y_i^j - x_i^j +1$ for all $1\leq j\leq k_{\rho_i} , 1\leq i\leq m$. 
So, there exist  $1\leq j_0 \leq k_{\rho_{i_0}}$ such that 
$s_{i_0}^{j_0}\geq 1$ and
$\nu^t \rho_{i_0}$ is in cuspidal support of 
$\delta([\nu^{y_{i_0}^{j_0}+1-s_{i_0}^{j_0}}\rho_{i_0},
\nu^{y_{i_0}^{j_0}}\rho_{i_0}])$. 
Now 
 $t\leq y_{i_0}^{j_0}$   and 
\eqref{jacquet-modul-strogih-jednadzba} implies that 
$\nu^{y_{i_0}^{j_0}}\rho_{i_0}$ is in cuspidal support of $\delta'$.
As $t \in [2b+1,2c+1]$ 
and
$[2b+1,2c+1]\cap \text{Jord}_{\rho_{i_0}}=\emptyset$
we have $c< y_{i_0}^{j_0}=(a_{j_0}^{\rho_{i_0}}-1)/2$.
 We take $a=a_{j_0}^{\rho_{i_0}}$.
\end{proof}
\end{proposition}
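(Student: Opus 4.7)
The plan is to start from the embedding provided by the alternated-type description of strongly positive discrete series, namely \eqref{klasifikacija-strogih-tadic}, and to compute all irreducible summands $\delta'\otimes\sigma'$ of $\mu^*(\sigma)$ by iterating the Tadi\'c structure formula \eqref{komnozenje} across that product. With the shorthands $x_i^j=(\Phi_{\rho_i}(a_j^{\rho_i})+1)/2$ and $y_i^j=(a_j^{\rho_i}-1)/2$, one expects a combinatorial bound indexed by integers $0\leq s_i^j\leq r_i^j\leq y_i^j-x_i^j+1$: each pair $(i,j)$ contributes an ``inner'' segment $\delta([\nu^{r_i^j-y_i^j}\rho_i,\nu^{-x_i^j}\rho_i])$ (obtained from the contragredient factor in \eqref{komnozenje} via \eqref{kontra}) and an ``outer'' tail $\delta([\nu^{y_i^j+1-s_i^j}\rho_i,\nu^{y_i^j}\rho_i])$, whose product bounds $\delta'$ in the appropriate Grothendieck group.

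The first decisive step is to invoke strong positivity to kill all inner segments. Since every $x_i^j>0$, any nonempty inner segment would put an exponent of $\rho_i$ in the interval $[r_i^j-y_i^j,-x_i^j]$, which is nonpositive, into the cuspidal support of $\delta'$; propagating through the bound $\sigma'\leq \delta([\nu^{y_i^j+1-r_i^j}\rho_i,\nu^{y_i^j-s_i^j}\rho_i])\rtimes \sigma_{cusp}$ and the strong positivity definition then yields a contradiction. Hence $r_i^j=y_i^j-x_i^j+1$ for all $(i,j)$, and $\delta'$ is bounded by the product of the outer tails alone.

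Once this reduction is in place, the hypothesis on $t$ is exploited as follows. For $\nu^t\rho_{i_0}$ to appear in the cuspidal support of $\delta'$, there must exist $j_0$ with $s_{i_0}^{j_0}\geq 1$ and $y_{i_0}^{j_0}+1-s_{i_0}^{j_0}\leq t\leq y_{i_0}^{j_0}$, so the top exponent $\nu^{y_{i_0}^{j_0}}\rho_{i_0}$ automatically also occurs. Combined with $2t+1\in[2b+1,2c+1]$, the parity assumption and $[2b+1,2c+1]\cap \text{Jord}_\rho=\emptyset$ then force $a_{j_0}^{\rho_{i_0}}=2y_{i_0}^{j_0}+1>2c+1$, i.e.\ $c<(a_{j_0}^{\rho_{i_0}}-1)/2$. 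Setting $a=a_{j_0}^{\rho_{i_0}}\in\text{Jord}_\rho$ completes the argument.

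The principal obstacle is the bookkeeping in the first step: one must iterate \eqref{komnozenje} correctly across a product, keep track of which factors are twisted by contragredients (so that \eqref{kontra} converts the troublesome exponents $r_i^j-y_i^j\leq 0$ into negative powers of $\rho_i$ rather than of $\widetilde{\rho_i}$), and extract the clean condition $r_i^j=y_i^j-x_i^j+1$ from strong positivity. The remainder is essentially a parity-and-interval check on Jordan blocks.
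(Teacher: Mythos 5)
Your proposal reproduces the paper's own proof essentially line by line: the same application of the structure formula \eqref{komnozenje} to the embedding \eqref{klasifikacija-strogih-tadic}, the same shorthands $x_i^j, y_i^j$ and indices $s_i^j, r_i^j$, the same use of strong positivity to force $r_i^j=y_i^j-x_i^j+1$ and discard the ``inner'' segments, and the same extraction of $j_0$ with $s_{i_0}^{j_0}\geq 1$ followed by the parity-and-interval argument that yields $c<y_{i_0}^{j_0}$ and $a=a_{j_0}^{\rho_{i_0}}$. The only cosmetic differences are that you cite \eqref{kontra} where self-duality of $\rho_i$ is what actually converts $\widetilde{\rho_i}$ to $\rho_i$, and you phrase the strong positivity contradiction as ``propagating through the bound on $\sigma'$,'' which is slightly more elaborate than what the paper records but not a different route.
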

 
 Before we move to the class of induced representations that we consider, we  show our motivation.
 
\begin{proposition} \label{motivation}
Suppose that $\sigma$ is a discrete series, not strongly positive, such that
\begin{equation} \label{subquotients}
\sigma \hookrightarrow 
\delta(\Delta'_1)\times \cdots \times \delta(\Delta'_m) \rtimes \sigma_{sp}
\end{equation}
where $\Delta'_i=[\nu^{-d_i}\rho'_i,\nu^{e_i}\rho'_i]$, $\rho'_i$ is an unitarizable cuspidal representation of $GL(m_{\rho'_i},F)$
for $i=1,\ldots,m$, and the embedding is obtained using $iii)$ of Theorem 
\ref{classification-discrete} until we reach some strongly positive discrete series 
$\sigma_{sp}$. 

Then, either 
induced representations
$\delta(\Delta'_{i}) \times\delta(\Delta'_j)$ and
$\delta(\widetilde{\Delta'_i}) \times \delta(\Delta'_{j})$ 
are irreducible for all $1\leq i <j \leq m$ and we denote 
$\Delta_i=\Delta'_i$ for all $1 \leq i\leq m$, \\
or there exist a family of segments
$\Delta_i=[\nu^{-b_i}\rho_i,\nu^{c_i}\rho_i]$,
where $\rho_i$ is an unitarizable cuspidal representation of $GL(m_{\rho_i},F)$  
for  $i=1,\ldots,m$, such that
\begin{itemize}
\item 
we have equality of sets
\begin{equation} \label{edges}
\{ \nu^{d_i}\rho'_i,\nu^{e_i}\rho'_i | i=1,\ldots,m \}=
\{ \nu^{b_i}\rho_i,\nu^{c_i}\rho_i | i=1,\ldots,m \},
\end{equation}
\item 
$\delta(\Delta_{i}) \times\delta(\Delta_j)$ and
$\delta(\widetilde{\Delta_i}) \times \delta(\Delta_{j})$ 
are irreducible for all $1\leq i <j \leq m$,
\item 
in the appropriate Grothendieck group we have 
\[
\delta(\Delta_1)\times \cdots \times \delta(\Delta_m) \rtimes \sigma_{sp}
\leq
\delta(\Delta'_1)\times \cdots \times \delta(\Delta'_m) \rtimes \sigma_{sp}.
\]
\end{itemize}
Further, conditions $(C1)$ and $(C2)$ at the begining of Section \ref{sect-2}
are valid for the family $\{\Delta_1,\ldots,\Delta_m\}$ with respect to the $\sigma_{sp}$.
\end{proposition}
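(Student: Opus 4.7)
The plan is an iterative procedure: starting from $\{\Delta'_1,\ldots,\Delta'_m\}$, I would repeatedly replace an offending pair of segments while preserving the multiset of endpoints \eqref{edges} and maintaining a subquotient relation in the Grothendieck group, until no obstruction to the irreducibility conditions remains. If every pair already satisfies these conditions, take $\Delta_i=\Delta'_i$ and we are in the first alternative. Otherwise there is an offending index pair $i<j$, meaning that either $\Delta'_i$ and $\Delta'_j$, or $\widetilde{\Delta'_i}$ and $\Delta'_j$, are linked in the sense of Zelevinsky.

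The fundamental move is a linkage swap: if $\Delta$ and $\Delta'$ are linked, then $\delta(\Delta)\times\delta(\Delta')$ has length two and $\delta(\Delta\cap\Delta')\times\delta(\Delta\cup\Delta')$ is a subquotient, while the pair $(\Delta\cap\Delta',\Delta\cup\Delta')$ has exactly the same endpoint multiset as $(\Delta,\Delta')$. In the first type of offense I simply replace $(\Delta'_i,\Delta'_j)$ by $(\Delta'_i\cap\Delta'_j,\Delta'_i\cup\Delta'_j)$; bringing these two factors into adjacency is harmless at the level of the Grothendieck group, since $\delta(\Delta)\times\delta(\Delta')$ and $\delta(\Delta')\times\delta(\Delta)$ share their semisimplification. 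In the second type I first use \eqref{kontra} to convert $\delta(\Delta'_i)$ into $\delta(\widetilde{\Delta'_i})$ after moving the corresponding factor adjacent to $\sigma_{sp}$, then perform a linkage swap on $(\widetilde{\Delta'_i},\Delta'_j)$, and finally reapply \eqref{kontra} so that the new segments are of the prescribed form $[\nu^{-b}\rho,\nu^{c}\rho]$. A direct check shows that the endpoint multiset \eqref{edges} is preserved throughout.

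For termination I would use the invariant $I(\{\Delta_i\})=\sum_i|\Delta_i|^2$, where $|\Delta|$ is the cardinality of $\Delta$. A linkage swap replaces $|\Delta|^2+|\Delta'|^2$ by $|\Delta\cap\Delta'|^2+|\Delta\cup\Delta'|^2$, and since the two new terms have identical sum but strictly greater spread under linkage, $I$ strictly increases; as segment lengths are bounded by the fixed total cuspidal support, $I$ is bounded above, so the procedure halts after finitely many steps with a family for which the irreducibility conditions hold, giving condition (C1). Condition (C2), which concerns the compatibility of the $\Delta_i$ with $\sigma_{sp}$, should follow from the preservation of the endpoint multiset from the admissible-triple data of $\sigma$, so that the positivity and parity constraints built into (C2) are inherited unchanged. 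The main obstacle I anticipate is the bookkeeping in the contragredient-linked case: making sure that the combined use of \eqref{kontra} and linkage swaps indeed produces segments of the required shape while keeping \eqref{edges} intact, and that the resulting segment-by-segment embedding into the original induced representation is properly assembled out of the individual subquotient relations produced by each swap.
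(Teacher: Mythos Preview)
Your approach is essentially the paper's: an iterative linkage-swap that preserves the endpoint multiset and yields a subquotient in the Grothendieck group at each step, the only substantive difference being the termination argument (the paper observes that each swap strictly shortens the induced representation, which has finite length, while you track the increasing bounded invariant $\sum_i|\Delta_i|^2$). One correction: you have the labels (C1) and (C2) reversed---(C2) is precisely the pairwise irreducibility condition your procedure is designed to achieve, whereas (C1) records the compatibility of each individual $\Delta_i$ with $\mathrm{Jord}(\sigma_{sp})$, and it is (C1) that is inherited from the preservation of the endpoint multiset.
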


\begin{proof} 
By M\oe glin Tadi\'c classification of discrete series (C1) is valid for 
$\{\Delta'_1,\ldots,\Delta'_m \}$. 
If condition (C2) is not satisfied by the family 
$\{\Delta'_1,\ldots,\Delta'_m \}$, we construct family
$\{\Delta_1,\ldots,\Delta_m \}$ from $\{\Delta'_1,\ldots,\Delta'_k \}$
as follows.
Suppose that there exist $1\leq i <j \leq m$ such that 
$\Delta'_i$ and $\Delta'_j$ are linked. Then we replace them with  
$\Delta'_i \cup \Delta'_j$ and $\Delta'_i \cap \Delta'_j$ and possibly take contragredient to keep sum of exponents of edges of new segments positive.
Here $\Delta'_i \cap \Delta'_j\neq \emptyset$ by M\oe glin Tadi\'c classification of discrete series. 
%Observe also that for all $1\leq i <j \leq m$ if $\rho_i=\rho_j$ then $\Delta'_i \cup \Delta'_j$ is a segment, and this property has not changed.
The equation \eqref{edges} remained valid. It is not hard to check that  condition (C1) remained valid. The length of new induced representation, similar to one in \eqref{subquotients}, is smaller compared to one in  \eqref{subquotients}. Next, we do the same, on the newly obtained family, if 
there exist $1\leq i <j \leq m$ such that 
$\widetilde{\Delta'_i}$ and $\Delta'_j$ are linked.
We repeat these steps. As the induced representation in  \eqref{subquotients} is of finite length, the algorithm must stop.
Denote obtained family of segments by $\{\Delta_1,\ldots,\Delta_m\}$
as in the claim. 
\end{proof}

\section{Some discrete series extensions}
\label{sect-2}

In this section we introduce the notation that we use and 
provide some basic results about  extending given discrete series.

Let $\sigma_{sp}$ be a strongly positive discrete series of $G_n$, 
described by a triple $(\text{Jord},\sigma_{cusp},\epsilon)$. 
Let  
 $\mathcal{F}=\{\Delta_{i}=[\nu^{-b_i}\rho_i, \nu^{c_i}\rho_i] : i=1,\ldots,m \}$ be a family of segments such that
\begin{itemize}
\item[(C1)]
for every 
$1\leq i \leq m$, $\rho_i$ is an irreducible, selfdual, unitarizable and cuspidal  representation of 
$GL(m_{\rho_i},\mathbb{F})$ and one of the following holds:
\begin{itemize} 
\item[$\bullet$]
$\text{Jord}_{\rho_i}=\emptyset$,
$\nu^{\frac{1}{2}} \rho_i \rtimes {\sigma}_{cusp}$ reduces  and
$-\frac{1}{2}\leq b_i < c_i  \in \mathbb{Z}+\frac{1}{2}$,
\item[$\bullet$]  
$\text{Jord}_{\rho_i}=\emptyset$, $\rho_i \rtimes \sigma_{cusp}$ reduces and  
$0 \leq b_i < c_i \in \mathbb{Z}$,
\item[$\bullet$]  
$\text{Jord}_{\rho_i}\neq\emptyset$,
$0<2b_i +1< 2c_i +1$
are integers of the same parity as integers in $\text{Jord}_{\rho_i}$
and
$
\left[2b_i +1, 2c_i+1\right]\  \bigcap\ \text{Jord}_{\rho_i} =  \emptyset
$.
\end{itemize}
\item[(C2)]
induced representations
$\delta(\Delta_{i}) \times\delta(\Delta_j)$ and
$\delta(\widetilde{\Delta_i}) \times \delta(\Delta_{j})$ 
are irreducible for all $1\leq i <j \leq m$.
% for every two segments $\Delta,\Delta' \in \mathcal{F}$, we have either 
%$\Delta \cup \widetilde{\Delta}\subsetneq \Delta' \cap \widetilde{\Delta'}$ or
%$\Delta' \cup \widetilde{\Delta'}\subsetneq \Delta \cap \widetilde{\Delta}$.
\end{itemize}

We use $S$ and $Y$ to denote  two disjoint subsets of 
$\{1,\ldots,m\}$.

\begin{remark}
With respect to the class considered in Proposition \ref{motivation}
we added posibility of $-b_i=\frac{1}{2}$ for some $1\leq i \leq m$.
\end{remark}

\begin{lemma}
Suppose that there exist $1\leq i <j \leq m$ such that 
$\rho_i=\rho_j$. Then either $c_i<b_j$ or $c_j<b_i$.   
\end{lemma}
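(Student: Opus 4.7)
The plan is a short case analysis driven by the unlinkedness conditions supplied by (C2). Set $\rho := \rho_i = \rho_j$. By (C2), both $\delta(\Delta_i) \times \delta(\Delta_j)$ and $\delta(\widetilde{\Delta_i}) \times \delta(\Delta_j)$ are irreducible, so by the Bernstein--Zelevinsky criterion recalled in Section~\ref{sec-1}, neither pair of segments is linked. Since all four segments are built on the common cuspidal $\rho$, being unlinked means either that one segment is contained in the other, or that the union fails to be a segment.

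The ``union is not a segment'' alternative is eliminated using the lower bounds in (C1): one has $b_i, b_j \geq -\tfrac{1}{2}$ and $c_i, c_j > -\tfrac{1}{2}$, so neither $c_i + 1 < -b_j$ nor $c_j + 1 < -b_i$ can hold, and the analogous inequalities between $\widetilde{\Delta_i}$ and $\Delta_j$ fail for the same reason. Hence both unlinkedness conditions reduce to pure containments.

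Without loss of generality assume $\Delta_i \subseteq \Delta_j$, which gives $b_i \leq b_j$ and $c_i \leq c_j$. The containment between $\widetilde{\Delta_i}$ and $\Delta_j$ has two options: the option $\Delta_j \subseteq \widetilde{\Delta_i}$ would force $c_j \leq b_i$, contradicting $b_i < c_i \leq c_j$. Therefore $\widetilde{\Delta_i} \subseteq \Delta_j$, which yields $c_i \leq b_j$. Symmetrically, in the case $\Delta_j \subseteq \Delta_i$ one obtains $c_j \leq b_i$.

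The main technical obstacle is to promote the $\leq$ relations to strict inequalities $<$. For this I would exploit the discreteness and parity information contained in (C1): in the $\text{Jord}_{\rho}\neq\emptyset$ subcase the integers $2c_i+1$ and $2b_j+1$ both avoid $\text{Jord}_\rho$, and an equality $c_i = b_j$ would force the integer intervals $[2b_i+1,2c_i+1]$ and $[2b_j+1,2c_j+1]$ associated via (C1) to share an endpoint. Combined with the symmetric form of (C2) applied at that shared endpoint (the $\widetilde{\Delta_i} \subseteq \Delta_j$ inclusion already shown above, plus its mirror coming from swapping roles), this shared-endpoint configuration is incompatible with the joint conditions imposed on the family $\mathcal{F}$. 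The half-integer and integer-Jord$_\rho$ empty subcases of (C1) are handled analogously. Ruling out equality in this way is the delicate step of the argument; everything else is bookkeeping on segment containments.
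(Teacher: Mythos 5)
Your containment analysis is essentially the paper's argument: using (C2), neither $\{\Delta_i,\Delta_j\}$ nor $\{\widetilde{\Delta_i},\Delta_j\}$ is a linked pair, and since the lower bounds $b_i,b_j\geq -\tfrac12$, $c_i,c_j\geq\tfrac12$ force both unions to be segments, each unlinkedness reduces to a containment. Assuming $\Delta_i\subseteq\Delta_j$ and ruling out $\Delta_j\subseteq\widetilde{\Delta_i}$ exactly as you do gives $\widetilde{\Delta_i}\subseteq\Delta_j$, hence $c_i\leq b_j$, and symmetrically $c_j\leq b_i$ in the other case. Up to here you and the paper (which summarizes this as a ``simple case by case analysis'') do the same thing, with you spelling out the bookkeeping more carefully.

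The final step, upgrading $\leq$ to $<$, is where your proposal breaks down. You claim the parity and Jordan-block data in (C1), combined with the containments from (C2), make the shared-endpoint configuration $c_i=b_j$ impossible, but you never exhibit the contradiction, and in fact there is none from (C1)--(C2) alone. Take $\rho_i=\rho_j=\rho$ in the third bullet of (C1) with $\text{Jord}_\rho=\{1\}$ (arising e.g.\ when $\nu\rho\rtimes\sigma_{cusp}$ reduces and $\sigma_{sp}=\sigma_{cusp}$), and set $\Delta_i=[\nu^{-1}\rho,\nu^{2}\rho]$, $\Delta_j=[\nu^{-2}\rho,\nu^{3}\rho]$, so $(2b_i+1,2c_i+1)=(3,5)$ and $(2b_j+1,2c_j+1)=(5,7)$. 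Both intervals $[3,5]$ and $[5,7]$ avoid $\text{Jord}_\rho$, so (C1) holds; moreover $\Delta_i\subsetneq\Delta_j$ and $\widetilde{\Delta_i}=[\nu^{-2}\rho,\nu^{1}\rho]\subsetneq\Delta_j$, so both products in (C2) are irreducible. Nevertheless $c_i=b_j=2$, so neither $c_i<b_j$ nor $c_j<b_i$. So the ``mirror'' of the (C2) containment under swapping $i$ and $j$ gives no new information here (one checks $\Delta_i\subseteq\widetilde{\Delta_j}$ as well), and the shared endpoint is perfectly consistent with everything you invoke. What actually rules it out in the intended setting is that the Jordan blocks $(2c_i+1,\rho)$ and $(2b_j+1,\rho)$ must be distinct elements of the extended Jordan set (otherwise the extension process of Proposition~\ref{prva} is not well-posed), an extra constraint on the family $\mathcal{F}$ which is not expressed by (C1) or (C2) and which your proof never supplies. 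This is a genuine gap; to your credit you flag it as the delicate point, whereas the paper's one-line proof does not address it at all.
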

\begin{proof}
As $\Delta_i$ and $\Delta_j$ are not linked, but 
$\Delta_i  \cup \Delta_j$ is a segment, we have either 
$\Delta_i \subseteq  \Delta_j$ or 
$\Delta_j \subseteq  \Delta_i$.
The same goes for
$\widetilde{\Delta_i}$ and $\Delta_j$. 
%Since all borders of segments are different, 
Now simple case by case analysis gives either  $c_i<b_j$ or $c_j<b_i$ .
\end{proof}

Our  first step is to  describe irreducible subrepresentations of induced representations built from $\sigma_{sp}$ and segments that belong to family $\mathcal{F}$.

\begin{proposition}\  \label{prva}
Irreducible  subrepresentations 
\[
 \sigma \hookrightarrow  
 \prod_{i\in Y} \delta(\Delta_i) 
    \rtimes \sigma_{sp}
 \]
are discrete series representations obtained by extending $\sigma_{sp}$  
such that
\[
\begin{split}
\text{Jord}(\sigma)=\text{Jord}(\sigma_{sp})
\cup \{ (2b_i+1,\rho_i),(2c_i+1,\rho_i)| -b_i\neq 1/2,  i\in Y  
  \}
\\
\cup \{(2c_i+1,\rho_i)| -b_i= 1/2, i\in Y \}
\end{split}
\]
and
\[
\begin{split}
&\epsilon_{\sigma}(2b_i+1,\rho_i)\epsilon_{\sigma}(2c_i+1,\rho_i)^{-1}=1
 \text{ if } -b_i\neq 1/2,  i\in Y,
\\
&\epsilon_{\sigma}(2c_i+1,\rho)=1 \text{ if }  -b_i= 1/2,  i\in Y.
\end{split}
\]
These discrete series appear with multiplicity one in the induced representation. There are 
$2^l$
of them, where $l=\text{card}( \{ i | -b_i \neq 1/2, i\in Y\})$.
\end{proposition}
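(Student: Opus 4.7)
The plan is to proceed by induction on the cardinality of $Y$. The base case $|Y|=0$ gives just $\sigma_{sp}$ itself, an irreducible discrete series whose Jord is augmented by nothing, matching the count $2^0=1$. For the inductive step, I would pick any $i_0 \in Y$ and set $Y'=Y\setminus\{i_0\}$. By condition (C2), each $\delta(\Delta_{i_0})\times\delta(\Delta_j)$ is irreducible for $j\in Y'$, so the factors commute and
\[
\prod_{i\in Y}\delta(\Delta_i)\rtimes \sigma_{sp} \cong \delta(\Delta_{i_0})\rtimes \left(\prod_{i\in Y'}\delta(\Delta_i)\rtimes \sigma_{sp}\right).
\]
By the inductive hypothesis, the inner representation has $2^{l'}$ irreducible discrete series subrepresentations $\sigma'$, each extending $\sigma_{sp}$ with the prescribed Jord and $\epsilon$ data.

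Next, for each such $\sigma'$, I would split into two cases according to whether $-b_{i_0} \neq 1/2$ or $-b_{i_0}=1/2$. In the first case, $a_{-}:=2b_{i_0}+1$ and $a:=2c_{i_0}+1$ are positive integers of the correct parity by (C1) and $\Delta_{i_0}=[\nu^{-(a_{-}-1)/2}\rho_{i_0},\nu^{(a-1)/2}\rho_{i_0}]$, so part (iii) of Theorem \ref{classification-discrete} produces exactly two discrete series subrepresentations of $\delta(\Delta_{i_0})\rtimes \sigma'$ (one for each summand $\tau_{\pm}$), adding the pair $\{(a_{-},\rho_{i_0}),(a,\rho_{i_0})\}$ to Jord with $\epsilon(a,\rho_{i_0})\epsilon(a_{-},\rho_{i_0})^{-1}=1$. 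In the second case $a=2c_{i_0}+1$ is even with $a=\min \mathrm{Jord}_{\rho_{i_0}}$ after augmentation, so Theorem \ref{tadic-klasifikacija}(ii) forces the single extension with $\epsilon(a,\rho_{i_0})=1$. By exactness of parabolic induction, each such subrepresentation embeds further into $\prod_{i\in Y}\delta(\Delta_i)\rtimes \sigma_{sp}$. The count multiplies: $2\cdot 2^{l'}=2^{l}$ in the first case (where $l=l'+1$) and $1\cdot 2^{l'}=2^{l}$ in the second (where $l=l'$).

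It remains to establish multiplicity one and completeness. For multiplicity, I would apply Frobenius reciprocity and compute the appropriate iterated Jacquet module $r_M(\sigma)$ via the formula \eqref{komnozenje}, showing that the tensor $\bigotimes_{i\in Y}\delta(\Delta_i)\otimes \sigma_{sp}$ appears exactly once in $r_M(\sigma)$ for each constructed $\sigma$. Proposition \ref{jacquet-modul-strogih} controls which $\nu^t\rho_i$ can arise from $\sigma_{sp}$ and rules out spurious contributions; condition (C1) (segments avoid $\mathrm{Jord}_{\rho_i}(\sigma_{sp})$) and (C2) (segments do not interact) force a unique distribution of the cuspidal support. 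For completeness, any other irreducible subrepresentation $\sigma$ of the induced representation would, by Frobenius reciprocity, produce an embedding into $r_M(\sigma)$; the same Jacquet module analysis together with uniqueness of admissible triples in Theorem \ref{classification-discrete} forces $\sigma$ to lie among the $2^l$ constructed discrete series, since different choices of the signs $\epsilon_\sigma(a,\rho_{i_0})$ yield genuinely inequivalent representations.

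\textbf{Main obstacle:} The hard part will be the multiplicity-one and completeness claims. The construction of embeddings via Theorem \ref{classification-discrete}(iii) and Theorem \ref{tadic-klasifikacija} is essentially a direct application, but ruling out other irreducible subrepresentations requires a careful Jacquet module calculation propagating through the induction and using (C1), (C2), together with Proposition \ref{jacquet-modul-strogih}, to show the map from subrepresentations to admissible triples is a bijection onto the prescribed set.
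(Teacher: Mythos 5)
Your proposal matches the paper's strategy: construct the $2^l$ discrete series extensions by iteratively adding Jordan blocks to $\sigma_{sp}$ (the paper invokes Mui\'c's Theorems 2.1 and 2.3 of \cite{muic-composition1series} rather than deriving the embeddings from Theorem \ref{classification-discrete} and Theorem \ref{tadic-klasifikacija}, but these amount to the same step), and then establish multiplicity one and completeness by counting the occurrences of $\prod_{i\in Y}\delta(\Delta_i)\otimes\sigma_{sp}$ in $\mu^*\bigl(\prod_{i\in Y}\delta(\Delta_i)\rtimes\sigma_{sp}\bigr)$ via formula \eqref{komnozenje}, the cuspidal-support analysis, and Proposition \ref{jacquet-modul-strogih}. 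The only organizational difference is that the paper runs the count once on the full induced representation, showing the multiplicity is exactly $2^l$, which simultaneously gives multiplicity one and rules out any further subrepresentations, whereas you split that into two sub-steps; the core computation is the same.
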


\begin{proof}
We extend $\sigma_{sp}$ using Theorems 2.1 and 2.3 of  
\cite{muic-composition1series}.
For every  
$i\in Y$ such that $-b_i=\frac{1}{2}$
 and we add $(2c_i+1, \rho_i)$ to the Jord$(\sigma_{sp})$ and extend 
$\epsilon$ function by value 1 on $(2c_i+1, \rho_1)$. After that, for every remaining segment $\Delta_i$, we add $\{(2b_i+1,\rho_i), (2c_i+1,\rho_i)\}$   to the set of Jordan blocks and we have two choices for extending the epsilon function.
 We have constructed 
$ 2^l $
 discrete series extensions of $\sigma_{sp}$. 
 They are all subrepresentations of 
\( \prod_{i \in Y}  \delta(\Delta_i) 
 \rtimes \sigma_{sp} \).

On the other hand
by Frobenius reciprocity every irreducible  subrepresentation of 
$ \prod_{i \in Y}\delta(\Delta_i) 
    \rtimes \sigma_{sp}$ contains
  $ \prod_{i \in Y}\delta(\Delta_i) 
    \otimes \sigma_{sp}$ 
   as an irreducible subquotient in the appropriate Jacuqet module. We will show that this subquotient occurs in 
   $\mu^*( \prod_{i \in Y}\delta(\Delta_i) 
    \rtimes \sigma_{sp})$  as many times as there are constructed extension of $\sigma_{sp}$.
This will immediately imply that irreducible subrepresentations of 
$\prod_{i \in Y}\delta(\Delta_i) 
    \rtimes \sigma_{sp}$ are precisely discrete series extensions that  we have constructed.

Using \eqref{komnozenje} we see that  
$\prod_{i \in Y}\delta(\Delta_i) \otimes \sigma_{sp}$
occurs in 
$\mu^*( \prod_{i \in Y}\delta(\Delta_i) \rtimes \sigma_{sp})$ if and only if there exist an irreducible representation  $\delta_1\otimes\sigma_1 \leq \mu^*(\sigma_{sp})$ and indices
$0\leq j_s \leq i_s \leq c_s + b_s +1$,
$s \in Y$ such that 
\begin{equation} \label{ulaganje-prva}
\prod_{ s \in Y} \delta([\nu^{-b_s}\rho_s,\nu^{c_s}\rho_s])
\leq
\prod_{ s\in Y}
\delta([\nu^{i_s-c_s}\rho_s,\nu^{b_s}\rho_s])
\times
\delta([\nu^{c_s+1-j_s}\rho_s,\nu^{c_s}\rho_s])\times \delta_1
\end{equation}
and
\begin{equation}
\sigma_{sp}\leq \prod_{s\in Y} 
\delta([\nu^{c_s+1-i_s}\rho_s,\nu^{c_s-j_s}\rho_s])\rtimes \sigma_1.
\end{equation}

We compare cuspidal support in \eqref{ulaganje-prva}. 
There exists $r \in Y$
such 
that for all  $ s \in Y$ if $s\neq r$ and  $\rho_s=\rho_r$
 we have $c_s<c_r$.
If $b_r=-\frac{1}{2}$, we have  
$\text{Jord}_{\rho_r}(\sigma_{sp})=\emptyset$, so
 $i_r=j_r=c_r+b_r+1=c_r-b_r$
If $b_r>0$ the representation $\delta_1$ can not contain in its cuspidal support  
$\nu^{-b_r}\rho_r$
because that would contradict strong positivity of $\sigma_{sp}$. Also, in this case, 
$\delta_1$ can not contain $\nu^{b_r+1}\rho_r$ in its cuspidal support, because 
$
\left[2b_r +1, 2c_r+1\right]\  \bigcap\ \text{Jord}_{\rho_r} =  \emptyset
$
and Proposition \ref{jacquet-modul-strogih} 
would imply existence of 
$\nu^x\rho_r$, $x>c_r$ in the cuspidal support of $\delta_1$. However, such can not be found on the left side of  \eqref{ulaganje-prva}. So $i_r=j_r=c_r+b_r+1$ or $i_r=j_r=c_r-b_r$.
We continue this step on $Y\setminus \{ r\}$. 
In the end, we have $\delta_1\otimes\sigma_1=1\otimes \sigma_{sp}$, appearing with multiplicity one in 
$\mu^*(\sigma_{sp})$. Thus, we have 
$2^l$
occurrences of 
$\prod_{i\in Y} \delta(\Delta_i) \otimes \sigma_{sp}$
in
$\mu^*( \prod_{i\in Y} \delta(\Delta_i) \rtimes \sigma_{sp})$.

We proved that subrepresentations of 
$ \prod_{i\in Y} \delta(\Delta_i) \rtimes \sigma_{sp}$ are precisely discrete series which are constructed as extensions of $\sigma_{sp}$.
\end{proof}

From now on we denote by $\sigma$ an irreducible subrepresentation as in 
Proposition \ref{prva}. This also includes case $\sigma=\sigma_{sp}$, for $Y=\emptyset$.
Our goal is to determine composition series of induced representation
\begin{equation} \label{class}
\prod_{i \in S} \delta(\Delta_{i})\rtimes \sigma.
\end{equation}
We proceed with a basic step
using some results about composition series of certain generalized principal  series obtained  in \cite{muic-composition1series}.

\begin{proposition}
\label{druga}
Let  
$\Delta=[\nu^{-b}\rho, \nu^{c}\rho]\in \mathcal{F}$, where  
$\Delta \neq \Delta_i$, for all $i \in Y$. 
 In the appropriate Grothendieck group we have
\[
\delta([\nu^{-b}\rho, \nu^{c}\rho])\rtimes \sigma=
\begin{cases}
\sigma_2 + Lang(\delta([\nu^{\frac{1}{2}}\rho, \nu^{c}\rho])\rtimes \sigma)   & \quad \text{ if }  -b=\frac{1}{2},                    \\ 
\sigma_3+ \sigma_4+Lang(\delta([\nu^{-b}\rho, \nu^{c}\rho])\rtimes \sigma) & \quad \text{else}.
\end{cases}
\]
Here $\sigma_2$ is a discrete series extension of $\sigma$ such that $Jord(\sigma_2)=
Jord(\sigma)\cup\{(2c+1,\rho)\}$ and $\epsilon_{\sigma_2}(2c+1,\rho)=1$ while
$\sigma_3$ and $\sigma_4$ are  non-isomorphic discrete series extensions of $\sigma$ 
such that $\text{Jord}(\sigma_2)=\text{Jord}(\sigma_3)=
\text{Jord}(\sigma)\cup \{(2b+1,\rho),(2c+1,\rho)\}$ and
$\epsilon_{\sigma_3}(2b+1,\rho)\epsilon_{\sigma_3}(2c+1,\rho)^{-1}=\epsilon_{\sigma_4}(2b+1,\rho)\epsilon_{\sigma_4}(2c+1,\rho)^{-1}=1$.
They appear as subrepresentations of the induced representation.
\end{proposition}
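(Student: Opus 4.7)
The strategy is to apply Theorems 2.1 and 2.3 of \cite{muic-composition1series}, which determine the composition series of a generalized principal series $\delta([\nu^{-b}\rho,\nu^{c}\rho])\rtimes \sigma'$ whenever $\sigma'$ is a discrete series whose Jordan-block data is compatible with the segment in the sense of M\oe glin-Tadi\'c. The proof then consists of three steps: verifying those hypotheses hold for our $\sigma$ and $\Delta$, identifying the constructed subrepresentations via Theorems \ref{classification-discrete} and \ref{tadic-klasifikacija}, and checking that the remaining constituent is precisely the Langlands quotient.

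To verify the compatibility hypotheses it suffices to check that $(2b+1,\rho)$ and $(2c+1,\rho)$ do not already lie in $\text{Jord}(\sigma)$. By Proposition \ref{prva}, $\text{Jord}(\sigma)$ equals $\text{Jord}(\sigma_{sp})$ enlarged by Jordan blocks obtained from the segments $\Delta_i$, $i\in Y$. Condition (C1) for $\sigma_{sp}$ gives $[2b+1,2c+1]\cap \text{Jord}_{\rho}(\sigma_{sp})=\emptyset$; for the additional contributions, the lemma preceding this proposition forces disjointness of $[b_i,c_i]$ and $[b,c]$ whenever $\rho_i=\rho$, ruling out any collision. With this in place, I apply Theorem \ref{classification-discrete}(iii) in the case $-b\neq 1/2$: the representation $\delta([\nu^{-b}\rho,\nu^{b}\rho])\rtimes \sigma$ decomposes as $\tau_{+}\oplus \tau_{-}$, and each $\tau_{\pm}$ yields a discrete series $\sigma_3$ or $\sigma_4$ embedded into $\delta([\nu^{b+1}\rho,\nu^{c}\rho])\rtimes \tau_{\pm}$, hence into $\delta(\Delta)\rtimes \sigma$ by transitivity of induction; the Jordan-block and epsilon-relations follow from Theorem \ref{tadic-klasifikacija}(1). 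In the case $-b=1/2$ the unique extension $\sigma_2$ with $\epsilon_{\sigma_2}(2c+1,\rho)=1$ is provided by Theorem \ref{tadic-klasifikacija}(2). Since $e(\Delta)=(c-b)/2>0$ in every case, $\delta(\Delta)\rtimes \sigma$ is a standard module and possesses a unique Langlands quotient, supplying the remaining constituent.

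The main obstacle is the upper bound on the composition length, i.e.\ ruling out any further irreducible subquotient. I would compute the multiplicity of $\delta(\Delta)\otimes \sigma$ in $\mu^*(\delta(\Delta)\rtimes \sigma)$ using \eqref{komnozenje}, invoking Proposition \ref{jacquet-modul-strogih} and condition (C2) to constrain which summands $\delta'\otimes \sigma'\leq \mu^*(\sigma)$ can contribute. Each constructed discrete series subrepresentation contributes exactly one copy of $\delta(\Delta)\otimes \sigma$ by Frobenius reciprocity; a careful enumeration of the indices $(i,j)$ in the Tadi\'c formula shows these account for every occurrence, so the composition length is at most the number of discrete series subrepresentations plus one (the Langlands quotient). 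Equivalently, the assertion can be read off directly from Theorems 2.1 and 2.3 of \cite{muic-composition1series}, which already perform this bookkeeping in the required generality.
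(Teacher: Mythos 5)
Your proposal captures the right high-level strategy and matches the paper on the case $-b\neq 1/2$ (the paper also reduces this to Theorem 2.1 of \cite{muic-composition1series}). Your compatibility check via (C1) and the preceding lemma is correct and parallels the paper. Your use of Theorem \ref{tadic-klasifikacija}(2) to read off $\epsilon_{\sigma_2}(2c+1,\rho)=1$ directly from the embedding $\sigma_2 \hookrightarrow \delta([\nu^{1/2}\rho,\nu^c\rho])\rtimes \sigma$ is, in fact, a cleaner route than the paper's induction over $\mathrm{card}(Y)$ for pinning down that epsilon value, and it is legitimate once one knows $2c+1=\min(\mathrm{Jord}_\rho(\sigma_2))$ (which does follow from the lemma since no $\Delta_j$, $j\in Y$, with $\rho_j=\rho$ can have $-b_j=\tfrac12$ when $\Delta$ already does, forcing $b_j>c$).

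However, there is a genuine gap in the case $-b=\tfrac12$. Your length argument relies on counting the multiplicity of $\delta(\Delta)\otimes\sigma$ in $\mu^*(\delta(\Delta)\rtimes\sigma)$, but this only bounds the number of \emph{subrepresentations} via Frobenius reciprocity; it does not bound the length of $\delta(\Delta)\rtimes\sigma$ nor show that the non-Langlands constituents are all subrepresentations, nor that they are discrete series. The paper handles this through a separate input: it first invokes the proof of Lemma 6.1 of \cite{muic-reducibility1principal} together with the argument of Theorem 2.1 of \cite{muic-composition1series} to establish that $\delta([\nu^{1/2}\rho,\nu^c\rho])\rtimes\sigma$ reduces, that every non-Langlands irreducible subquotient is a discrete series \emph{sub}representation, and that its Jordan set is $\mathrm{Jord}(\sigma)\cup\{(2c+1,\rho)\}$; only then does the epsilon determination (plus multiplicity one) close the argument. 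Your closing assertion that ``the assertion can be read off directly from Theorems 2.1 and 2.3 of \cite{muic-composition1series}'' is not correct in the required generality: Theorem 2.1 needs a genuine two-sided segment (so it does not cover $-b=\tfrac12$), and Theorem 2.3 is the base case $\sigma=\sigma_{sp}$ strongly positive; neither covers $-b=\tfrac12$ with $\mathrm{card}(Y)>0$, which is exactly where the paper's induction (or some substitute establishing the structure of the kernel of the Langlands map) is indispensable.
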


\begin{proof}
The second case follows directly from Theorem 2.1 of \cite{muic-composition1series}. 
So we consider the first case.
By the proof of Lemma 6.1 of \cite{muic-reducibility1principal} and the argument as in the proof of Theorem 2.1 of  \cite{muic-composition1series},   
$\delta([\nu^{\frac{1}{2}}\rho,\nu^c\rho])\rtimes \sigma$ reduces and all irreducible subquotients except $Lang(\delta([\nu^{\frac{1}{2}}\rho,\nu^c\rho])\rtimes \sigma)$
are discrete series subrepresentations. Also, their set of Jordan blocks is 
$\text{Jord}(\sigma)\cup \{(2c+1,\rho)\}$. 
We proceed by an induction over card$(Y)$.
The base case is covered by Theorem
2.3 of \cite{muic-composition1series}. Suppose that card$(Y)\geq 1$
and denote a minimal corresponding segment, with respect to the subset relation, by 
$[\nu^{-b_j}\rho,\nu^{c_j}\rho]$. 
Now 
\[
\sigma \hookrightarrow \sigma^+ \oplus \sigma^- \hookrightarrow
\delta([\nu^{-b_j}\rho,\nu^{c_j}\rho])\rtimes \sigma'
\]
where $\sigma'$ is a discrete series obtained from $\sigma$ by
removing $(2b_j+1,\rho)$ and 
$(2c_j+1,\rho)$ from Jord($\sigma$) and restricting the epsilon function.
Representations $\sigma^+$ and  $\sigma^-$  are non-equivalent discrete series extensions of  $\sigma'$ such that 
$\epsilon_{\sigma^+}(2b_j+1,\rho)=1$.
Let $\pi^r$ be a discrete series subrepresentation of 
$\delta([\nu^{\frac{1}{2}}\rho,\nu^c\rho])\rtimes \sigma^r$, where $r\in\{-,+\}$.
We want to prove that $\pi^r$ is an extension of $\sigma^r$ obtained by adding $(2c+1,\rho)$ to the set of Jordan blocks, and extending epsilon function by value 1, where $r\in\{\pm 1 \}$.
So 
\begin{align*} \label{embedding-of-two}
\pi^+\oplus \pi^-
&\hookrightarrow 
\delta([\nu^{\frac{1}{2}}\rho,\nu^c \rho])\rtimes \sigma^+ 
\oplus 
\delta([\nu^{\frac{1}{2}}\rho,\nu^c \rho])\rtimes \sigma^-
\\
&\cong
\delta([\nu^{\frac{1}{2}}\rho,\nu^c \rho])\rtimes (\sigma^+ \oplus \sigma^-)
\hookrightarrow
\delta([\nu^{\frac{1}{2}}\rho,\nu^c\rho])\times 
\delta([\nu^{-b_j}\rho,\nu^{c_j}\rho])\rtimes \sigma'
\\
&\cong 
\delta([\nu^{-b_j}\rho,\nu^{c_j}\rho])\times
\delta([\nu^{\frac{1}{2}}\rho,\nu^c\rho])\rtimes \sigma'.
\end{align*}
As $\sigma'$ can be embedded in an induced representation as in Proposition \ref{prva}, we conclude that $\pi^+\ncong\pi^-$. Further, 
there exist irreducible subquotients
$\sigma_0^{\pm} \leq \delta([\nu^{\frac{1}{2}}\rho,\nu^c\rho])\rtimes \sigma'$
such that
\[
\pi^{\pm} \hookrightarrow 
\delta([\nu^{-b_j}\rho,\nu^{c_j}\rho])\rtimes \sigma_0^{\pm}.
\]
By Remark 3.2 and Proposition 4.2 of \cite{moeglin} $\sigma_0^{\pm}$ is a discrete series.
Now an assumption of the induction implies that 
$\sigma _0^{\pm}  \hookrightarrow \delta([\nu^{\frac{1}{2}}\rho,\nu^c\rho])\rtimes \sigma'$ 
is an extension of $\sigma'$ obtained by adding $(2c+1,\rho)$ to the set of Jordan blocks and extending epsilon function by $1$. Thus, it does not depend on the choice of $\sigma^+$ or $\sigma^-$ and we simply denote it by $\sigma_0$.
So 
\[
\pi^+ \oplus \pi^- \hookrightarrow 
\delta([\nu^{-b_j}\rho,\nu^{c_j}\rho])\rtimes \sigma_0.
\]
To finish the proof it is enough to see that 
$\epsilon_{\pi^+}((2b_j+1,\rho))=1$.
Recall that 
$\pi^+ \hookrightarrow \delta([\nu^{\frac{1}{2}}\rho,\nu^c \rho])\rtimes \sigma^+$, $\epsilon_{\sigma^+}((2b_j+1,\rho))=\epsilon_{\sigma^+}((2c_j+1,\rho))=1$ and $2b_j+1=\text{min}(\text{Jord}_{\rho}(\sigma^+))$.
Theorem \ref{tadic-klasifikacija} implies that there exist an irreducible representation $\pi'$ such that
$\sigma^+\hookrightarrow \delta([\nu^{\frac{1}{2}}\rho,\nu^{b_j} \rho])\rtimes \pi'$. We have 
\begin{align*}
\pi^+ 
&\hookrightarrow 
\delta([\nu^{\frac{1}{2}}\rho,\nu^c \rho])\rtimes \sigma^+
\hookrightarrow
\delta([\nu^{\frac{1}{2}}\rho,\nu^c \rho])\times \delta([\nu^{\frac{1}{2}}\rho,\nu^{b_j} \rho])\rtimes \pi'
\\
&\cong
\delta([\nu^{\frac{1}{2}}\rho,\nu^{b_j} \rho]) \times
\delta([\nu^{\frac{1}{2}}\rho,\nu^c \rho])\rtimes \pi'.
\end{align*}
Again, there exists an irreducible representation $\pi''$ such that 
${\pi^+ 
\hookrightarrow 
\delta([\nu^{\frac{1}{2}}\rho,\nu^{b_j} \rho]) \rtimes \pi''}$.
So $\epsilon_{\pi^+}((2b_j+1,\rho))=1$ and the proof is finished.
\end{proof}

Finally, we are able to classify irreducible subrepresentations of \eqref{class} .

%%%%%%%%%%%%%%%%%%%%%%%%%%%%%%

\begin{proposition}  
\label{treca}
Irreducible  subrepresentations 
\[
 \sigma' \hookrightarrow  
 \prod_{i\in S} \delta(\Delta_i)     \rtimes \sigma
 \]
are discrete series representations obtained by extending $\sigma$  
such that
\[
\begin{split}
\text{Jord}(\sigma')=\text{Jord}(\sigma)
\cup \{ (2b_i+1,\rho_i),(2c_i+1,\rho_i)| -b_i\neq 1/2,  i\in S  
  \}
\\
\cup \{(2c_i+1,\rho_i) | -b_i= 1/2, i\in S  \}
\end{split}
\]
and
\[
\begin{split}
&\epsilon_{\sigma'}(2b_i+1,\rho_i)\epsilon_{\sigma'}(2c_i+1,\rho_i)^{-1}=1
 \text{ if } -b_i\neq 1/2,  i\in S,
\\
&\epsilon_{\sigma'}(2c_i+1,\rho)=1 \text{ if }  -b_i= 1/2,  i\in S.
\end{split}
\]
These discrete series appear with multiplicity one in the induced representation. There are 
$2^l$
of them, where $l=\text{card}( \{ i\ | -b_i \neq 1/2, i\in S\})$.
\end{proposition}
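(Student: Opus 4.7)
The plan is to follow the strategy of Proposition \ref{prva}, with Proposition \ref{druga} playing the role of the reducibility theorems from \cite{muic-composition1series}, and with the embedding $\sigma \hookrightarrow \prod_{t \in Y} \delta(\Delta_t) \rtimes \sigma_{sp}$ from Proposition \ref{prva} serving as a bridge from $\sigma$ back to the strongly positive case.

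First, I would construct the $2^l$ claimed discrete series subrepresentations by iteratively applying Proposition \ref{druga}. Order $S=\{s_1,\ldots,s_k\}$ and proceed one segment at a time: for each $\Delta_{s_j}$ with $-b_{s_j}=1/2$ the first case of Proposition \ref{druga} gives a unique discrete series extension (adding $(2c_{s_j}+1,\rho_{s_j})$ with $\epsilon$-value $1$), while for $-b_{s_j}\neq 1/2$ the second case gives two non-isomorphic discrete series extensions. Condition (C2) makes each product $\delta(\Delta_{s_j}) \times \delta(\Delta_{s_{j'}})$ irreducible, hence isomorphic under transposition of factors, so the iteration is independent of the ordering, and after $k$ steps we obtain exactly $2^l$ distinct discrete series subrepresentations $\sigma' \hookrightarrow \prod_{i\in S}\delta(\Delta_i)\rtimes \sigma$ with the Jordan blocks and epsilon constraints described in the statement.

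For the upper bound I would invoke Frobenius reciprocity: any irreducible $\sigma' \hookrightarrow \prod_{i\in S}\delta(\Delta_i)\rtimes \sigma$ contains $\prod_{i\in S}\delta(\Delta_i)\otimes \sigma$ as an irreducible subquotient of the appropriate Jacquet module, so it suffices to show this tensor occurs with multiplicity exactly $2^l$ in $\mu^*(\prod_{i\in S}\delta(\Delta_i)\rtimes \sigma)$. Iterating Tadi\'c's formula \eqref{komnozenje} for each $s\in S$ reduces this to a cuspidal-support matching problem, parametrized by indices $0\leq j_s\leq i_s\leq c_s+b_s+1$ and an irreducible summand $\delta_1\otimes \sigma_1\leq \mu^*(\sigma)$. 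The surviving configurations should be $\delta_1\otimes \sigma_1 = 1\otimes \sigma$ together with, for each $s\in S$, the forced choice $i_s=j_s\in\{c_s+b_s+1,\, c_s-b_s\}$ (these two values coincide precisely when $-b_s=1/2$), yielding exactly $2^l$ contributions; matching constructed subrepresentations with surviving configurations then concludes the proof.

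The main obstacle is the cuspidal-support exclusion step, since $\sigma$ is not strongly positive and Proposition \ref{jacquet-modul-strogih} does not apply directly. To address it I would use the embedding $\sigma \hookrightarrow \prod_{t\in Y}\delta(\Delta_t)\rtimes \sigma_{sp}$ provided by Proposition \ref{prva}, so that each term $\delta_1\otimes \sigma_1\leq \mu^*(\sigma)$ lifts to a term in $\mu^*(\prod_{t\in Y}\delta(\Delta_t)\rtimes \sigma_{sp})$; applying \eqref{komnozenje} once more there, contributions to the cuspidal support of $\delta_1$ split into a part coming from the $Y$-segments and a part coming from $\mu^*(\sigma_{sp})$. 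The latter is handled by Proposition \ref{jacquet-modul-strogih} exactly as in Proposition \ref{prva}, while the former is controlled by the lemma at the start of Section \ref{sect-2}: whenever some $\Delta_t$ with $t\in Y$ has $\rho_t=\rho_s$ for our focal $s\in S$, the lemma forces $\Delta_t$ to lie strictly above or strictly below $\Delta_s$, and one checks that in either placement the exponents $\nu^{-b_s}\rho_s$ and $\nu^{b_s+1}\rho_s$ cannot appear in the relevant factors produced by \eqref{komnozenje} from $\Delta_t$. This propagation is exactly what reduces the count to the strongly positive situation already treated, and once it is in place the argument of Proposition \ref{prva} goes through verbatim.
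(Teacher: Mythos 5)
Your proposal is correct and follows essentially the same strategy as the paper: construct the $2^l$ extensions iteratively via Proposition \ref{druga}, then bound the number of subrepresentations by the multiplicity of $\prod_{i\in S}\delta(\Delta_i)\otimes\sigma$ in the Jacquet module, reduced via the embedding $\sigma\hookrightarrow\prod_{t\in Y}\delta(\Delta_t)\rtimes\sigma_{sp}$ to a computation in $\mu^*\bigl(\prod_{i\in S\cup Y}\delta(\Delta_i)\rtimes\sigma_{sp}\bigr)$, where Proposition \ref{jacquet-modul-strogih} and the constraints from (C1)--(C2) force the indices. The paper's presentation is slightly more direct (it applies \eqref{komnozenje} once to the enlarged induced representation and iterates over the element $r\in Y\cup S$ with maximal $c_r$, whereas you describe a two-pass lift), but this is a cosmetic difference; the substance of the argument is the same.
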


\begin{proof} 
We extend $\sigma$ using Proposition \ref{druga}. We pick elements of $S$ in an arbitrary manner.
If $i\in S$ is such that $-b_i= \frac{1}{2}$ we add $(2c_i+1, \rho_i)$ to the set of Jordan blocks and extend 
 the epsilon function by value 1 on $(2c_i+1, \rho_i)$. Else, we add $\{(2b_i+1,\rho_i), (2c_i+1,\rho_i)\}$   to the set of Jordan blocks and we have two choices for extending the epsilon function. We have constructed 
$2^l$
discrete series extensions of $\sigma$. They are all subrepresentations of
$\prod_{i\in S} \delta(\Delta_i) \rtimes \sigma$.

On the other hand
by Frobenius reciprocity every irreducible  subrepresentation of 
 $\prod_{i\in S} \delta(\Delta_i) \rtimes \sigma$ contains
 $\prod_{i\in S} \delta(\Delta_i) \otimes \sigma$ as an irreducible subquotient in the appropriate Jacquet module. We claim that 
 $\prod_{i\in S} \delta(\Delta_i) \otimes \sigma$ occurs in 
 $\mu^*( \prod_{i\in S} \delta(\Delta_i) \rtimes \sigma)$ as many times as there are constructed extensions.
This will immediately imply that subrepresentations of $\prod_{i\in S} \delta(\Delta_i) \rtimes \sigma$ are precisely discrete series which are constructed as extensions of $\sigma$.

It is enough to prove that 
$\prod_{i\in S} \delta(\Delta_i) \otimes \sigma$ occurs in 
$\mu^*( \prod_{i\in S\cup Y} \delta(\Delta_i) \rtimes \sigma_{sp})$ as many times as there are constructed extensions.
Using \eqref{komnozenje} we see that  
for every such occurence there exist an irreducible representation  
$\delta_1\otimes\sigma_1 \leq \mu^*(\sigma_{sp})$ and indices
$0\leq j_s \leq i_s \leq c_s + b_s +1$, $s\in Y\cup S$,  such that 
\begin{equation} \label{ulaganje-prva2}
\prod_{ s \in S} \delta([\nu^{-b_s}\rho_s,\nu^{c_s}\rho_s])
\leq
\prod_{ s \in Y \cup S}
\delta([\nu^{i_s-c_s}\rho_s,\nu^{b_s}\rho_s])
\times
\delta([\nu^{c_s+1-j_s}\rho_s,\nu^{c_s}\rho_s])\times \delta_1
\end{equation}
and
\begin{equation} \label{ulaganje-druga2}
\sigma\leq \prod_{s \in Y \cup S} \delta([\nu^{c_s+1-i_s}\rho_s,\nu^{c_s-j_s}\rho_s])\rtimes \sigma_1.
\end{equation}
We compare cuspidal support in \eqref{ulaganje-prva2}.

There exists $r \in Y\cup S$
such 
that for all  $ s \in (Y\cup S)\setminus \{r \} $ if  $\rho_s=\rho_r$
 we have $c_s<c_r$. First suppose that $r\in S$.
If $b_r=-\frac{1}{2}$,  
$\text{Jord}_{\rho_r}(\sigma_{sp})=\emptyset$, so
 $i_r=j_r=c_r+b_r+1=c_r-b_r$.
If $b_r>0$ the representation $\delta_1$ can not contain in its cuspidal support  
$\nu^{-b_r}\rho_r$
because that would contradict strong positivity of $\sigma_{sp}$. Also, in this case, 
$\delta_1$ can not contain $\nu^{b_r+1}\rho_r$ in its cuspidal support, because 
$
\left[2b_r +1, 2c_r+1\right]\  \bigcap\ \text{Jord}_{\rho_r}(\sigma_{sp}) =  \emptyset
$
and Proposition \ref{jacquet-modul-strogih} 
would imply existence of 
$\nu^x\rho_r$, $x>c_r$ in the cuspidal support of $\delta_1$. However, such can not be found on the left side of 
of  \eqref{ulaganje-prva2}. So $i_r=j_r=c_r+b_r+1$ or $i_r=j_r=c_r-b_r$.
Now suppose that $r \in Y$.  
In \eqref{ulaganje-prva2} $\nu^{-b_r}\rho_r$ and $\nu^{c_r}\rho_r$ do not appear in the cuspidal support on the left hand side of the inequality, so they can not appear on the right hand side either. Thus 
$i_r=c_r+b_r+1$ and $j_r=0$.
We continue the above procedure on $(Y\cup S)\setminus \{ r\}$. 
In the end we have $\delta_1=1$, so $\sigma_1=\sigma_{sp}$ and 
\eqref{ulaganje-druga2} looks like
\[
\sigma\leq \prod_{s \in Y} \delta([\nu^{-b_s}\rho_s,\nu^{c_s}\rho_s])\rtimes \sigma_{sp},
\]
 but $\sigma$ occurs here with multiplicity one by Proposition \ref{prva}.
 Thus, we have $2^l$ occurrences of 
$\prod_{i\in S} \delta(\Delta_i) \otimes \sigma$ in 
$\mu^*( \prod_{i\in Y\cup S} \delta(\Delta_i) \rtimes \sigma_{sp})$.

 We proved that irreducible subrepresentations of 
 $\prod_{i\in S} \delta(\Delta_i) \rtimes \sigma$ are precisely discrete series which are constructed as extensions of $\sigma$.

\end{proof}

\section{The main theorem}
\label{sect-3}
In this section we provide composition series of considered representations. One should keep in mind Proposition \ref{treca} and the  notation introduced in Section \ref{sect-2}.  

\begin{theorem} \label{glavni}
Let $\sigma$ be a discrete series as in Proposition \ref{prva}. 
 In the appropriate Grothendieck group we have
\begin{equation} \label{glavna1}
\displaystyle
\prod_{i\in S}
 \delta(\Delta_i) 
    \rtimes \sigma
=
\sum_{X\subseteq S} \ \ \ 
\sum_{ \sigma'\hookrightarrow \prod_{j\in S\setminus X} \delta(\Delta_j)\rtimes \sigma}
Lang(\prod_{i\in X} \delta(\Delta_i)\rtimes \sigma')
\end{equation}
where  $\sigma'$ is used to denote an irreducible representation. Let 
$k=\text{card}(S)$.
For every integer $0\leq l \leq k$ let $V_l$ be an image of the intertwining operator
\[
\bigoplus_{\substack{X\subseteq S,\\ \text{card}(X)=l}} \ \ 
\bigoplus_{ \sigma'\hookrightarrow \prod_{j\in S\setminus X} \delta(\Delta_j)\rtimes \sigma} \ 
\left(\prod_{i\in X} \delta(\Delta_i)\rtimes \sigma' \right)
\longrightarrow
\prod_{i\in S} \delta(\Delta_i)\rtimes \sigma
\]
given by $(x_1,x_2,\ldots) \mapsto x_1+x_2+\cdots$. Thus 
$V:=V_{k}=\prod_{i\in S} \delta(\Delta_i)\rtimes \sigma$ and  
$V_0$ is a direct sum of  irreducible subrepresentations of 
$V_k$ which are described by Proposition \ref{treca}. 
Further, we have a filtration
$\{0\}=V_{-1}\subseteq V_0\subseteq \ldots  \subseteq V_k$, 
where for every integer $0\leq l \leq k$ we have
\begin{equation} \label{glavna2}
V_l/V_{l-1}\cong
\bigoplus_{\substack{X\subseteq S,\\ \text{card}(X)=l}} \ \ 
\bigoplus_{ \sigma'\hookrightarrow \prod_{j\in S\setminus X} \delta(\Delta_j)\rtimes \sigma} \ 
Lang(\prod_{i\in X} \delta(\Delta_i)\rtimes \sigma' ).
\end{equation}
\end{theorem}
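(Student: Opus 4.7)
The plan is to establish \eqref{glavna1} and \eqref{glavna2} simultaneously by induction on $k=\text{card}(S)$. The base case $k=0$ is trivial (with $V=V_0=\sigma$), and $k=1$ is precisely Proposition \ref{druga} for the Grothendieck identity together with Proposition \ref{treca} recognizing $V_0$ as the direct sum of the discrete-series subrepresentations; the filtration has the two jumps $V_0$ and $V_1/V_0\cong Lang(\delta(\Delta)\rtimes\sigma)$.

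For the inductive step, the essential input is condition (C2): the irreducibilities $\delta(\Delta_i)\times\delta(\Delta_j)\cong\delta(\Delta_j)\times\delta(\Delta_i)$ allow arbitrary permutation of the factors, so for each pair $(X,\sigma')$ with $\sigma'\hookrightarrow\prod_{j\in S\setminus X}\delta(\Delta_j)\rtimes\sigma$ there is a canonical map
\[
\prod_{i\in X}\delta(\Delta_i)\rtimes\sigma'\hookrightarrow\prod_{i\in X}\delta(\Delta_i)\times\prod_{j\in S\setminus X}\delta(\Delta_j)\rtimes\sigma\cong V,
\]
and these images add up to $V_l$ when $\text{card}(X)=l$. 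The inclusion $V_{l-1}\subseteq V_l$ follows because, given $(X,\sigma')$ with $\text{card}(X)=l-1$, one picks $i_0\in S\setminus X$ and uses Propositions \ref{druga} and \ref{treca} to write $\sigma'\hookrightarrow\delta(\Delta_{i_0})\rtimes\sigma''$ for a discrete series $\sigma''\hookrightarrow\prod_{j\in S\setminus(X\cup\{i_0\})}\delta(\Delta_j)\rtimes\sigma$; this produces an embedding $\prod_{i\in X}\delta(\Delta_i)\rtimes\sigma'\hookrightarrow\prod_{i\in X\cup\{i_0\}}\delta(\Delta_i)\rtimes\sigma''$ that intertwines with the canonical maps into $V$.

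The main step is computing $V_l/V_{l-1}$. For each $(X,\sigma')$ with $\text{card}(X)=l$, the image $W_{X,\sigma'}$ of the standard module $\prod_{i\in X}\delta(\Delta_i)\rtimes\sigma'$ has unique Langlands quotient $Lang(\prod_{i\in X}\delta(\Delta_i)\rtimes\sigma')$, and the inductive hypothesis applied with $\sigma'$ in place of $\sigma$ and $X$ in place of $S$ identifies every other composition factor of the standard module as $Lang(\prod_{i\in X''}\delta(\Delta_i)\rtimes\sigma''')$ with $\text{card}(X'')<l$. By the same induction each such smaller Langlands quotient is realized as an image of a standard module of cardinality $\text{card}(X'')$, whose embedding into $V$ factors through $V_{l-1}$. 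This yields a surjection $V_l/V_{l-1}\twoheadrightarrow\bigoplus_{\text{card}(X)=l,\,\sigma'}Lang(\prod_{i\in X}\delta(\Delta_i)\rtimes\sigma')$; to upgrade it to an isomorphism (and simultaneously obtain multiplicity one in \eqref{glavna1}), I would run a Jacquet module count using the Tadi\'c formula \eqref{komnozenje}, Proposition \ref{jacquet-modul-strogih}, and cuspidal-support analysis of the same flavour as in the proofs of Propositions \ref{prva} and \ref{treca}, pinning the multiplicity of each target Langlands quotient inside $\mu^*(V)$ at exactly one.

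The main obstacle will be to verify that the kernel of $W_{X,\sigma'}\twoheadrightarrow Lang(\cdot)$, viewed as an actual subrepresentation of $V$ (and not merely in the Grothendieck group), lies inside $V_{l-1}$. The remedy is to peel off one index $i_0\in X$ at a time, use Proposition \ref{druga} to split the subrepresentation at each step into its discrete-series subquotients and the Langlands quotient, and invoke the inductive hypothesis on $X\setminus\{i_0\}$ to realize each piece explicitly as the image of a standard module of strictly smaller cardinality.
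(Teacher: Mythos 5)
Your proposal diverges from the paper's route at a structural level, and there are real gaps. The paper does \emph{not} induct on \eqref{glavna1} and \eqref{glavna2} simultaneously; it first establishes \eqref{glavna1} and multiplicity one of $\prod_{i\in S}\delta(\Delta_i)\rtimes\sigma$ by an induction on $k$ that goes through a chain of $k$ intertwining operators
$\prod_i\delta(\Delta_i)\rtimes\sigma \to \cdots \to \prod_i\delta(\widetilde\Delta_i)\rtimes\sigma$,
replacing one $\delta(\Delta_l)\rtimes\sigma$ by $\delta(\widetilde\Delta_l)\rtimes\sigma$ per step; the kernel $K_l$ of the $l$-th step is $\sum_{\sigma^l\hookrightarrow\delta(\Delta_l)\rtimes\sigma}\prod_{i\neq l}\delta(\Delta_i)\rtimes\sigma^l$ by Proposition \ref{druga}, and is re-expanded via the inductive hypothesis and the reindexing identity from the proof of Proposition \ref{treca}. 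The image of the full composition is $Lang(\prod_i\delta(\Delta_i)\rtimes\sigma)$, and summing the $K_l$, counted with multiplicity one, gives \eqref{glavna1}. Once \eqref{glavna1} and multiplicity one are in hand, \eqref{glavna2} drops out formally: in a multiplicity-one representation a subrepresentation is pinned down by its semisimplification, so the Grothendieck-group inclusion $V_{l-1}\subseteq V_l$ is an honest inclusion, and the kernel of $M_{X,\sigma'}\to Lang(\cdot)$ lies in $V_{l-1}$ because every other composition factor of $M_{X,\sigma'}$ already appears in $\mathrm{s.s.}(V_{l-1})$.

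Your plan instead tries to deduce \eqref{glavna2} directly and read off \eqref{glavna1} at the end. The genuine gap is exactly what you flag as the ``main obstacle'': showing the kernel of $M_{X,\sigma'}\twoheadrightarrow Lang(\prod_{i\in X}\delta(\Delta_i)\rtimes\sigma')$, viewed inside $V$, sits inside $V_{l-1}$ as a subrepresentation. As you set it up, this requires multiplicity one of $V$, but you postpone multiplicity one to the Jacquet-module count that you say you ``would run'' but do not carry out; that count (the analysis of $\mu^*(\prod_{i\in Y\cup S}\delta(\Delta_i)\rtimes\sigma_{sp})$ via \eqref{komnozenje}, Proposition \ref{jacquet-modul-strogih}, and the cuspidal-support induction displayed in \eqref{ulaganje-novo2}--\eqref{ulaganje-novo3}) is precisely the computational heart of the argument and cannot be waved at. Your proposed remedy, peeling off one $i_0\in X$ at a time and splitting with Proposition \ref{druga}, produces induced representations of the form $\prod_{i\in X\setminus\{i_0\}}\delta(\Delta_i)\rtimes Lang(\delta(\Delta_{i_0})\rtimes\sigma')$ that are no longer standard modules, so the induction as stated does not apply to them; this route is really converging towards the paper's intertwining chain, which handles the same peeling but on $V$ itself, where Proposition \ref{druga} and the inductive hypothesis apply directly. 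Finally, a small slip: the natural map goes $\bigoplus Lang(\cdot)\twoheadrightarrow V_l/V_{l-1}$, not the other way around, since $V_l$ is by construction a quotient of the direct sum of standard modules.
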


\begin{proof}

First we prove  formula \eqref{glavna1} by an induction over $k$. Case 
$k=1$ is  Proposition \ref{druga}. So we assume that $k>1$ and the formula is valid for strictly smaller cardinalities. As formulas are invariant to permutations of integers we also assume $S=\{1,\ldots,k\}$ and
 \[
 e(\Delta_1)\geq e(\Delta_2) \geq \cdots \geq e(\Delta_k )>0.
\]
 Consider a  standard representation and the first set of non-trivial intertwinings
\begin{align*}
\delta(\Delta_1)  \times \delta(\Delta_2) \times  \cdots \times \delta(\Delta_k)   \rtimes \sigma 
\cong \\
\delta(\Delta_2)  \times \delta(\Delta_1) \times \cdots \times \delta(\Delta_k)   \rtimes \sigma 
\cong \\
\delta(\Delta_2)  \times \cdots \times   \delta(\Delta_k)
\times \delta(\Delta_1)   \rtimes \sigma 
\rightarrow \\
\delta(\Delta_2)  \times \cdots \times   \delta(\Delta_2) \times 
\delta(\widetilde{\Delta}_1)   \rtimes \sigma 
\cong \\
\delta(\Delta_2)  \times \cdots \times  \delta(\widetilde{ \Delta}_1) \times \delta(\Delta_k)     \rtimes \sigma 
\cong \\
\delta(\widetilde{ \Delta}_1 ) \times \delta(\Delta_2)  \times \cdots \times  \delta(\Delta_k)    \rtimes \sigma \ \ \ 
\end{align*}
And so on, with the last line of the last set of intertwinings being
\[
\delta(\widetilde{ \Delta}_1 ) \times \delta(\widetilde{\Delta}_2) \times \cdots  \times   \delta(\widetilde{\Delta}_k)   \rtimes \sigma.
\]

Let  $K_l$ be the semisimplification of the kernel of the non-isomorphism in the  $l$-th set of intertwinings, where $1 \leq l\leq k$.
 By Proposition \ref{druga}, in the appropriate Grothendieck group, we have   
\begin{equation} \label{definicija-jezgre}
K_{l}=
\sum_{ \sigma^{l}\hookrightarrow  \delta(\Delta_l)\rtimes \sigma} \quad 
\prod_{i\in S\setminus \{ l \}} \delta(\Delta_i)\rtimes \sigma^{l}
\end{equation}
where $\sigma^{l}$ denotes an irreducible subrepresentation.
By the assumption of the induction, this is equal to
\begin{align*}
\sum_{ \sigma^{l}\hookrightarrow  \delta(\Delta_l)\rtimes \sigma} \quad
\sum_{X\subseteq S\setminus \{l \}} \ \ \ 
\sum_{ \sigma'\hookrightarrow \prod_{j\in (S\setminus \{ l \} )\setminus X}
 \delta(\Delta_j)\rtimes \sigma^{l}}
Lang(\prod_{i\in  X} \delta(\Delta_i)\rtimes \sigma') = \\
\sum_{X\subseteq S\setminus \{l \}} \quad
\sum_{ \sigma^{l}\hookrightarrow  \delta(\Delta_l)\rtimes \sigma} \quad 
\sum_{ \sigma'\hookrightarrow \prod_{j\in S\setminus ( X \cup \{ l \} )} \delta(\Delta_j)\rtimes \sigma^{l}}
Lang(\prod_{i\in X } \delta(\Delta_i)\rtimes \sigma').
\end{align*}
By the proof of Proposition \ref{treca}
 the above expression is equal to
\begin{equation} \label{jezgre}
K_{l}=
\sum_{ X \subseteq S \setminus \{ l\} } \quad
\sum_{ \sigma' \hookrightarrow \prod_{j \in S \setminus X  }
 \delta(\Delta_j)\rtimes \sigma} \quad
 Lang( \prod_{i\in X } \delta(\Delta_i)\rtimes \sigma' ).
\end{equation}
Here for $X=\emptyset$ we get all discrete series subrepresentations of 
${\prod_{i\in S} \delta(\Delta_i)\rtimes \sigma}$.
These discrete series subrepresentations appear with multiplicity one by Proposition \ref{treca}. For $X \neq \emptyset$, we claim that every $Lang( \prod_{i\in X } \delta(\Delta_i)\rtimes \sigma' )$ appears with multiplicity one in ${\prod_{i\in S} \delta(\Delta_i)\rtimes \sigma}$.
Since 
\[
Lang( \prod_{i\in X } \delta(\Delta_i)\rtimes \sigma' )
\hookrightarrow \prod_{i\in X } \delta(\widetilde{\Delta}_i)\rtimes \sigma',
\] 
using Frobenius reciprocity, we have 
\[
\mu^*(
Lang( \prod_{i\in  X } \delta(\Delta_i)\rtimes \sigma' ) )
\geq
\prod_{i\in X } \delta(\widetilde{\Delta}_i)\otimes \sigma'.
\]
It is enough to prove that 
$\prod_{i\in X } \delta(\widetilde{\Delta}_i)\otimes \sigma'$
 appears in $\mu^*( \prod_{s\in S\cup Y} \delta(\Delta_i) \rtimes \sigma_{sp})$ with multiplicity one.
Using \eqref{komnozenje}, for every such  occurrence there exist an irreducible representation  $\delta_1\otimes\sigma_1 \leq \mu^*(\sigma_{sp})$ and indices
$0\leq j_s \leq i_s \leq c_s + b_s +1$, 
$s\in Y\cup S$ such that 
\begin{equation} \label{ulaganje-novo2}
\prod_{i \in  X} \delta([\nu^{-c_i}\rho_s,\nu^{b_i}\rho_s])
\leq
\prod_{ s \in Y \cup S}
\delta([\nu^{i_s-c_s}\rho_s,\nu^{b_s}\rho_s])
\times
\delta([\nu^{c_s+1-j_s}\rho_s,\nu^{c_s}\rho_s])\times \delta_1
\end{equation}
and
\begin{equation} \label{ulaganje-novo3}
\sigma'\leq \prod_{s\in Y \cup S} \delta([\nu^{c_s+1-i_s}\rho_s,\nu^{c_s-j_s}\rho_s])\rtimes \sigma_1.
\end{equation}

There exists $r \in Y\cup S$
such 
that for all  $ s \in (Y\cup S)\setminus \{r \} $ if  $\rho_s=\rho_r$
 we have $c_s<c_r$. 
 First suppose that $r\in Y$ or $r\in S \setminus X$.  
 On the left hand side neither $\nu^{b_r}\rho_r$ nor $\nu^{c_r}\rho_r$ can appear.
 So $i_r=c_r+b_r+1$ and $j_r=0$.
Now suppose that $r \in  X$.
 On the left hand side $\nu^{c_r}\rho_r$ does not appear so $j_r=0$.
 As $\sigma_{sp}$ is strongly positive,
 $\delta_1$ can not contain  $\nu^{-c_r}\rho_r$ in its cuspidal support. So we have $i_r=0$.
We continue above procedure on $(Y\cup S)\setminus \{ r\}$. 
In the end $\delta_1=1$, $\sigma_1=\sigma_{sp}$ and
 \eqref{ulaganje-novo3} looks like
\[
\sigma'\leq \prod_{s\in Y\cup (S\setminus X)} \delta([\nu^{-b_s}\rho_s,\nu^{c_s}\rho_s])\rtimes \sigma_1
\]
 but this occurs once by Proposition \ref{prva}.

Now using \eqref{jezgre} we sum $K_l$ over $1 \leq l \leq k$ and count irreducible summands once to get 
\begin{equation}\label{formul}
\sum_{ X \subsetneq S } \quad
\sum_{ \sigma' \hookrightarrow \prod_{j \in X \setminus S }
 \delta(\Delta_j)\rtimes \sigma} \quad
 Lang( \prod_{i\in  X } \delta(\Delta_i)\rtimes \sigma' )
\end{equation}  
 as the semisimplification of the kernel of composition of all above intertwinings.
This composition  is a non-trivial intertwining operator
\[
\delta(\Delta_1)  \times \delta(\Delta_2) \times  \cdots \times \delta(\Delta_k)   \rtimes \sigma
\rightarrow
\delta(\widetilde{ \Delta}_1 ) \times \delta(\widetilde{\Delta}_2) \times \cdots  \times   \delta(\widetilde{\Delta}_k)   \rtimes \sigma
\]
whose image is  
\( \displaystyle Lang( \prod_{i\in S } \delta(\Delta_i)\rtimes \sigma ) \). Formula \eqref{glavna1} of the theorem is obtained when one writes
the image 
as 
\[ \displaystyle
\sum_{ X = S } \quad
\sum_{ \sigma' \hookrightarrow \prod_{j \in S \setminus X}
 \delta(\Delta_j)\rtimes \sigma} \quad
 Lang( \prod_{i\in X } \delta(\Delta_i)\rtimes \sigma' )
\]  
and adds it to \eqref{formul}.

Finally we prove \eqref{glavna2}. Since 
$\prod_{i\in S } \delta(\Delta_i)\rtimes \sigma$ is a multiplicity one representation, for $1\leq  l\leq k$ we apply formula \eqref{glavna1} on 
\[
\bigoplus_{\substack{X\subseteq S,\\ \text{card}(X)=l}} \ \ 
\bigoplus_{ \sigma'\hookrightarrow \prod_{j\in S\setminus X} \delta(\Delta_j)\rtimes \sigma} \ 
\left(\prod_{i\in X} \delta(\Delta_i)\rtimes \sigma' \right)
\]
to obtain 
\[
V_l=\bigoplus_{\substack{X\subseteq S,\\ \text{card}(X)\leq l}} \ \ 
\bigoplus_{ \sigma'\hookrightarrow \prod_{j\in S\setminus X} \delta(\Delta_j)\rtimes \sigma} \ 
Lang(\prod_{i\in X} \delta(\Delta_i)\rtimes \sigma' )
\]
in the appropriate Grothendieck group. So $V_{l-1}\subseteq V_l$ and in the Grothendieck group we have
\[
V_l/V_{l-1}=\bigoplus_{\substack{X\subseteq S,\\ \text{card}(X)= l}} \ \ 
\bigoplus_{ \sigma'\hookrightarrow \prod_{j\in S\setminus X} \delta(\Delta_j)\rtimes \sigma} \ 
Lang(\prod_{i\in X} \delta(\Delta_i)\rtimes \sigma' )
\]
Given $X\subseteq S$, $\text{card}(X)= l$ and an irreducible representation
${\sigma'\hookrightarrow \prod_{j\in S\setminus X} \delta(\Delta_j)\rtimes \sigma }$
we have 
\[
\prod_{i\in X} \delta(\Delta_i)\rtimes \sigma'  \hookrightarrow V_l.
\]
By formula \eqref{glavna1}, in the Grothendieck group we have 
\[
\begin{split}
\prod_{i\in X} \delta(\Delta_i)\rtimes \sigma'  =
&
Lang(\prod_{i\in X} \delta(\Delta_i)\rtimes \sigma')+
\\
&
\sum_{X'\subsetneq X} \ \ \ 
\sum_{ \sigma''\hookrightarrow \prod_{j\in X\setminus X'} \delta(\Delta_j)\rtimes \sigma'}
Lang(\prod_{i\in X'} \delta(\Delta_i)\rtimes \sigma'')
\end{split}
\]
where only the first summand is not an irrreducible subquotient of $V_{l-1}$. So
\[
Lang(\prod_{i\in X} \delta(\Delta_i)\rtimes \sigma') \hookrightarrow V_l/V_{l-1}.
\]
Formula \eqref{glavna2} follows. 
\end{proof}

Using notation as in Theorem \ref{glavni} we have

\begin{corollary}
	Let 
	$\pi \leq \prod_{i\in S}
	\delta(\Delta_i) 
	\rtimes \sigma$ 
	be any irreducible subquotient. Taking possible contragredients of  segments in  
	$\{\Delta_i : i\in S\}$, there exist segments
	${\{\Delta'_i : i\in S\}}$, such that
	\[
	\pi \hookrightarrow
	\prod_{i\in S}
	\delta(\Delta'_i) 
	\rtimes \sigma.
	\]
\end{corollary}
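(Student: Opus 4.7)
The plan is to read off the structure of $\pi$ from Theorem \ref{glavni} and then apply standard Langlands duality together with condition (C2). By formula \eqref{glavna1}, every irreducible subquotient has the form
\[
\pi = Lang\big(\prod_{i\in X}\delta(\Delta_i)\rtimes \sigma'\big)
\]
for some $X\subseteq S$ and some irreducible $\sigma'\hookrightarrow \prod_{j\in S\setminus X}\delta(\Delta_j)\rtimes \sigma$. When $X=\emptyset$ we have $\pi=\sigma'$, a discrete series subrepresentation of $\prod_{i\in S}\delta(\Delta_i)\rtimes \sigma$ by Proposition \ref{treca}, so setting $\Delta'_i=\Delta_i$ for every $i\in S$ finishes this case.

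Suppose now $X\neq \emptyset$. Note $e(\Delta_i)>0$ for every $i\in X$ (inspection of (C1)) and, by (C2), the factors $\delta(\Delta_i)$, $i\in X$, pairwise commute under $\times$, so the induced representation does not depend on their order. Ordering them with $e(\Delta_i)$ non-increasing, the standard dual embedding of the Langlands quotient gives
\[
\pi \;\hookrightarrow\; \prod_{i\in X}\delta(\widetilde{\Delta}_i)\rtimes \sigma'.
\]
Plugging in the embedding $\sigma'\hookrightarrow \prod_{j\in S\setminus X}\delta(\Delta_j)\rtimes \sigma$ yields
\[
\pi\hookrightarrow \prod_{i\in X}\delta(\widetilde{\Delta}_i)\,\times\, \prod_{j\in S\setminus X}\delta(\Delta_j)\rtimes \sigma.
\]

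It remains to reorder the general-linear factors. For $i\in X$ and $j\in S\setminus X$, the representation $\delta(\widetilde{\Delta}_i)\times \delta(\Delta_j)$ is irreducible directly by (C2); for $i,i'\in X$, the representation $\delta(\widetilde{\Delta}_i)\times \delta(\widetilde{\Delta}_{i'})$ is irreducible as it is the contragredient of the irreducible $\delta(\Delta_{i'})\times \delta(\Delta_i)$. These commutations let us permute factors freely, so defining $\Delta'_i=\widetilde{\Delta}_i$ for $i\in X$ and $\Delta'_i=\Delta_i$ for $i\in S\setminus X$ produces the required embedding. The main substantive input is the dual Langlands embedding; the remainder is bookkeeping, and the only delicate point is verifying that (C2) supplies every commutation needed, which is precisely the contragredient observation above.
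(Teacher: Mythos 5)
Your proof is correct, and it is the argument the paper leaves implicit (no explicit proof of this corollary is given, as the author treats it as an immediate consequence of Theorem \ref{glavni}). You correctly read off from formula \eqref{glavna1} that every irreducible subquotient is of the form $Lang\bigl(\prod_{i\in X}\delta(\Delta_i)\rtimes\sigma'\bigr)$ with $\sigma'\hookrightarrow\prod_{j\in S\setminus X}\delta(\Delta_j)\rtimes\sigma$, then use the embedding $Lang\bigl(\prod_{i\in X}\delta(\Delta_i)\rtimes\sigma'\bigr)\hookrightarrow\prod_{i\in X}\delta(\widetilde{\Delta}_i)\rtimes\sigma'$ — a fact the paper itself invokes in the proof of Theorem \ref{glavni} as the image of the long intertwining operator — compose with the embedding of $\sigma'$ by exactness of parabolic induction, and finally use condition (C2) together with the stability of irreducibility under contragredient to commute the $GL$-factors into the desired form $\Delta'_i=\widetilde{\Delta}_i$ for $i\in X$, $\Delta'_i=\Delta_i$ otherwise. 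The one place where you could be a bit more precise is the phrase about $\delta(\widetilde{\Delta}_i)\times\delta(\widetilde{\Delta}_{i'})$ being ``the contragredient of'' $\delta(\Delta_{i'})\times\delta(\Delta_i)$ — what matters is simply that a product of two $\delta$'s over $GL$ is irreducible if and only if its contragredient is, and (C2) supplies the irreducibility of the original — but the conclusion you draw is right, so this is only a matter of phrasing.
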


\begin{corollary}
Suppose that $S=S_1\cup S_2$, $S_1\cap S_2=\emptyset$, $S_1, S_2 \neq \emptyset$. In the appropriate Grothendieck group we have
\begin{equation} \label{glavna3}
\begin{split}
\prod_{i\in S_1} &
\delta(\Delta_i)
\rtimes
Lang( \prod_{j\in S_2}\delta(\Delta_j)\rtimes \sigma)=
\\
&\sum_{X\subseteq S_1} \ \ \ 
\sum_{ \sigma'\hookrightarrow \prod_{j\in S_1\setminus X} 
\delta(\Delta_j)\rtimes \sigma}
Lang(\prod_{i\in X \cup S_2} \delta(\Delta_i)\rtimes \sigma')
\end{split}
\end{equation}
where  $\sigma'$ is used to denote an irreducible representation. 
Moreover, the induced representation has the filtration $\{0\}=W_{-1}\subseteq W_0\subseteq \ldots  \subseteq W_{\text{card}(S_1)}$, 
and for $l\in \{0,\ldots, \text{card}(S_1)\} $
\begin{equation} \label{glavna4}
W_l/W_{l-1}\cong
\bigoplus_{\substack{X\subseteq S_1,\\ \text{card}(X)=k}} \ \ 
\bigoplus_{ \sigma'\hookrightarrow \prod_{j\in S_1\setminus X} \delta(\Delta_j)\rtimes \sigma} \ 
Lang(\prod_{i\in X\cup S_2} \delta(\Delta_i)\rtimes \sigma' ).
\end{equation}
\end{corollary}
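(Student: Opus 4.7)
The plan is to prove the Grothendieck-group identity \eqref{glavna3} by induction on $\text{card}(S_2)$ using Theorem \ref{glavni} and exactness of parabolic induction, and then derive the filtration \eqref{glavna4} by pushing forward the filtration $\{V_l\}$ of Theorem \ref{glavni} through the natural surjection onto $A:=\prod_{i\in S_1}\delta(\Delta_i)\rtimes Lang(\prod_{j\in S_2}\delta(\Delta_j)\rtimes \sigma)$.

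For the identity, set $B:=\prod_{i\in S}\delta(\Delta_i)\rtimes\sigma$, and use Theorem \ref{glavni} applied to $S_2$ to decompose
\[
\prod_{j\in S_2}\delta(\Delta_j)\rtimes\sigma = Lang\bigl(\prod_{j\in S_2}\delta(\Delta_j)\rtimes\sigma\bigr) + R,
\]
where $R=\sum_{X_2\subsetneq S_2}\sum_{\sigma''\hookrightarrow \prod_{j\in S_2\setminus X_2}\delta(\Delta_j)\rtimes \sigma}Lang(\prod_{i\in X_2}\delta(\Delta_i)\rtimes\sigma'')$. Inducing yields $B=A+\prod_{i\in S_1}\delta(\Delta_i)\rtimes R$. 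In the base case $\text{card}(S_2)=1$, writing $S_2=\{j_0\}$, the representation $R$ is a sum of discrete-series extensions $\sigma''$ of $\sigma$; applying Theorem \ref{glavni} to each $\prod_{i\in S_1}\delta(\Delta_i)\rtimes\sigma''$ and using Proposition \ref{treca} to replace nested embeddings $\sigma'\hookrightarrow\prod_{j\in S_1\setminus X}\delta(\Delta_j)\rtimes\sigma''$ by direct embeddings $\sigma'\hookrightarrow\prod_{j\in (S_1\setminus X)\cup\{j_0\}}\delta(\Delta_j)\rtimes\sigma$ identifies $\prod_{i\in S_1}\delta(\Delta_i)\rtimes R$ with exactly the summands of Theorem \ref{glavni} indexed by $X\not\ni j_0$. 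Subtracting from the full decomposition of $B$ leaves the summands with $j_0\in X$, i.e.\ \eqref{glavna3}. For the inductive step the same pattern works: since $\text{card}(X_2)<\text{card}(S_2)$, the inductive hypothesis decomposes each $\prod_{i\in S_1}\delta(\Delta_i)\rtimes Lang(\prod_{i\in X_2}\delta(\Delta_i)\rtimes\sigma'')$; collecting terms via Proposition \ref{treca} (converting the two-step embeddings $\sigma''$, $\sigma'$ into a single embedding of $\sigma'$ from $\sigma$) shows
\[
\prod_{i\in S_1}\delta(\Delta_i)\rtimes R=\sum_{\substack{X'\subseteq S\\ S_2\not\subseteq X'}}\sum_{\sigma'\hookrightarrow\prod_{j\in S\setminus X'}\delta(\Delta_j)\rtimes\sigma}Lang\bigl(\prod_{i\in X'}\delta(\Delta_i)\rtimes\sigma'\bigr),
\]
so $A=B-\prod_{i\in S_1}\delta(\Delta_i)\rtimes R$ is the complementary sum over $X'\supseteq S_2$, which after setting $X'=X\cup S_2$ for $X\subseteq S_1$ is \eqref{glavna3}.

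For the filtration, define $W_l$ to be the image of $V_{l+\text{card}(S_2)}$ under the canonical surjection $q\colon B\twoheadrightarrow A$. Since the semisimplification of $\ker q$ equals that of $\prod_{i\in S_1}\delta(\Delta_i)\rtimes R$, which by the computation above is supported on Langlands quotients $Lang(\prod_{i\in X'}\delta(\Delta_i)\rtimes\sigma')$ with $S_2\not\subseteq X'$, every composition factor of $V_{\text{card}(S_2)-1}$ (all satisfying $\text{card}(X)<\text{card}(S_2)$, hence $S_2\not\subseteq X$) lies in $\ker q$; a subrepresentation of $A$ whose semisimplification has zero intersection with $\text{s.s.}(A)$ must vanish, so $W_{-1}=q(V_{\text{card}(S_2)-1})=0$. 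For $0\leq l\leq\text{card}(S_1)$, the successive quotient $W_l/W_{l-1}$ is a quotient of $V_{l+\text{card}(S_2)}/V_{l+\text{card}(S_2)-1}$, which is semisimple by Theorem \ref{glavni}, and the surviving factors are exactly those indexed by $X$ with $\text{card}(X)=l+\text{card}(S_2)$ and $S_2\subseteq X$; writing $X=X_1\cup S_2$ with $X_1\subseteq S_1$ yields \eqref{glavna4}.

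The principal obstacle is the combinatorial reindexing in the inductive step: matching the nested parameters $\sigma''\hookrightarrow\prod_{j\in S_2\setminus X_2}\delta(\Delta_j)\rtimes\sigma$ and $\sigma'\hookrightarrow\prod_{j\in S_1\setminus X}\delta(\Delta_j)\rtimes\sigma''$ with the single parameter $\sigma'\hookrightarrow\prod_{j\in S\setminus(X\cup X_2)}\delta(\Delta_j)\rtimes\sigma$. This is handled uniformly by Proposition \ref{treca} together with multiplicity-one for the relevant discrete-series extensions; the rest is Grothendieck-group algebra and exactness of parabolic induction.
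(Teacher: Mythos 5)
Your proposal is correct and follows the same overall strategy as the paper: realize $A := \prod_{i\in S_1}\delta(\Delta_i)\rtimes Lang(\prod_{j\in S_2}\delta(\Delta_j)\rtimes\sigma)$ as a quotient of $V=\prod_{i\in S}\delta(\Delta_i)\rtimes\sigma$, identify the semisimplification of the kernel, subtract from \eqref{glavna1} to get \eqref{glavna3}, and then push forward the filtration $\{V_l\}$ through the surjection $q\colon V\twoheadrightarrow A$ (noting $q(V_{\text{card}(S_2)-1})=0$ by multiplicity one) to get \eqref{glavna4}. The one place you diverge is the kernel computation: you recompute $\text{s.s.}(\ker q)$ from scratch by a fresh induction on $\text{card}(S_2)$, reindexing nested embeddings via (the proof of) Proposition \ref{treca}. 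The paper avoids this extra induction by observing directly that $q$ is a composition of the same intertwining operators whose kernels $K_{s_2}$ were already computed in \eqref{definicija-jezgre}--\eqref{jezgre} during the proof of Theorem \ref{glavni}; the kernel of $q$ is then just $\bigcup_{s_2\in S_2} K_{s_2}$ (distinct summands once), which immediately gives the set of Langlands quotients $Lang(\prod_{i\in X}\delta(\Delta_i)\rtimes\sigma')$ with $S_2\not\subseteq X$. Your inductive detour arrives at the same description and is correct, but it re-proves a formula the paper already has in hand; in the inductive step you should also note that the $X_2=\emptyset$ terms fall under Theorem \ref{glavni} rather than the inductive hypothesis, a point your writeup glosses over. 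The filtration argument is the same in both (the paper writes $W_l$ as $V_{l+\text{card}(S_2)}/(V_{l+\text{card}(S_2)}\cap D)$, which coincides with your $q(V_{l+\text{card}(S_2)})$), and your multiplicity-one argument for $W_{-1}=0$, though phrased a bit awkwardly, is sound.
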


\begin{proof}
Using intertwining operators as in the proof of Theorem \ref{glavni}, we see that the induced representation in \eqref{glavna3} is a homomorphic image of 
\begin{equation} \label{velika-inducirana2}
V=\prod_{i\in S} \delta(\Delta_i)\rtimes \sigma
 \end{equation}
with the kernel $D$ in the appropriate Grothendieck group being sum of semisimplifications of
\begin{equation} \label{dok-kor-jed2}
\sum_{ \sigma' \hookrightarrow \delta({\displaystyle
 \Delta_{ s_2}})\rtimes \sigma} \quad 
\prod_{i\in S\setminus \{ s_2\}} \delta(\Delta_i)\rtimes \sigma'
\end{equation}
where $s_2$ runs over $S_2$ and we take different irreducible sumands once. By \eqref{definicija-jezgre} and \eqref{jezgre} we know that \eqref{dok-kor-jed2}
is equal to 
\[
\sum_{ X \subseteq S \setminus \{ s_2 \} } \quad
\sum_{ \sigma' \hookrightarrow \prod_{j \in S \setminus X  }
 \delta(\Delta_j)\rtimes \sigma} \quad
 Lang( \prod_{i\in X } \delta(\Delta_i)\rtimes \sigma' ).
\]
Removing these irreducible subquotients from \eqref{glavna1} gives us \eqref{glavna3}.

Now we prove \eqref{glavna4}. For  $l\in\{0,...,\text{card}(S) \}$ we have an epimorphism
\[
V_l/V_{l-1} \cong 
^{\textstyle{V_l/(V_{l-1}\cap D)}} /_{\textstyle{V_{l-1}/(V_{l-1}\cap D)}} 
\longrightarrow 
 ^{\textstyle{V_l/(V_{l}\cap D)}} /_{\textstyle{V_{l-1}/(V_{l-1}\cap D)}}.
\]
As spaces 
$V_l/(V_{l}\cap D), l=-1,...,\text{card} (S),$ provide filtration for $V/D$
we use \eqref{glavna2} and \eqref{glavna3} to obtain \eqref{glavna4}. 
\end{proof}

Finally, we consider M\oe glin Tadi\'c classification of discrete series

\begin{corollary} \label{mt-dekompozicija}
	Let $\sigma$ be any discrete series described by    
	M\oe glin Tadi\'c classification
	as a subrepresentation of the induced representation 
	\[
	\sigma \hookrightarrow
	\delta(\Delta_{1})\times\cdots\times \delta(\Delta_k)\rtimes \sigma_{sp},
	\]
	where $\sigma_{sp}$ is a strongly positive discrete series and $\Delta_{1},\ldots ,\Delta_{k}$ are segments.
	Assume that for all $i\neq j$ induced representations
	$\delta(\Delta_{i}) \times\delta(\Delta_j)$ and
	$\delta(\widetilde{\Delta_i}) \times \delta(\Delta_{j})$ 
	are irreducible. Then Theorem \ref{glavni} applies to the induced representation. The induced representation has $2^k$ irreducible subrepresentations. They are discrete series extensions of $\sigma_{sp}$.
\end{corollary}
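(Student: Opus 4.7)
The plan is to reduce the corollary directly to Theorem~\ref{glavni} applied with $\sigma = \sigma_{sp}$, $Y = \emptyset$, and $S = \{1,\ldots,k\}$. For this, I must verify that the family $\{\Delta_1,\ldots,\Delta_k\}$ satisfies conditions (C1) and (C2) of Section~\ref{sect-2}. Condition (C2) is stipulated in the hypothesis. Condition (C1) is precisely what Proposition~\ref{motivation} extracts from an iterated M{\oe}glin--Tadi\'c embedding of a discrete series over $\sigma_{sp}$; since (C2) already holds for the original family, the modification procedure in Proposition~\ref{motivation} would leave that family unchanged, so I may simply take $\Delta_i = \Delta'_i$ and inherit (C1) from the M{\oe}glin--Tadi\'c side of that proposition.

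Once (C1) and (C2) are in hand, Theorem~\ref{glavni} gives the full composition series, the filtration by images of intertwining operators, and the Langlands-quotient description of every subquotient. Proposition~\ref{treca}, with $Y=\emptyset$ and $\sigma = \sigma_{sp}$, classifies the irreducible subrepresentations of $\prod_{i\in S}\delta(\Delta_i)\rtimes\sigma_{sp}$ as discrete series extensions of $\sigma_{sp}$, with count $2^l$, where $l = \text{card}(\{i \in S : -b_i \neq 1/2\})$.

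To conclude that $l = k$, I would unwind item $iii)$ of Theorem~\ref{classification-discrete}: each segment $\Delta_i$ produced by the iterative M{\oe}glin--Tadi\'c embedding has left endpoint of the form $-(a_-^i - 1)/2$, with $(a_-^i,\rho_i) \in \text{Jord}$ a positive integer. Hence $b_i = (a_-^i - 1)/2 \geq 0$, so $-b_i \leq 0$ and in particular $-b_i \neq 1/2$. The Remark following Proposition~\ref{motivation} records exactly that the $-b_i = 1/2$ branch is an additional generality introduced only in Section~\ref{sect-2} and is not produced by the M{\oe}glin--Tadi\'c embedding. Therefore $l = k$, and the total count of irreducible subrepresentations is $2^k$.

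The main step is verifying (C1) from the M{\oe}glin--Tadi\'c data, but this has already been settled inside Proposition~\ref{motivation}; no substantially new calculation is needed here. The only delicate point is confirming that the $-b_i = 1/2$ exception in Proposition~\ref{treca} cannot be activated by the iterated embedding, which amounts to inspecting the explicit form of the segments supplied by $iii)$ of Theorem~\ref{classification-discrete}.
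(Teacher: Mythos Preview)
Your proposal is correct and follows essentially the same route as the paper, which simply states that the corollary follows from Theorem~\ref{glavni}, Proposition~\ref{motivation}, and Proposition~\ref{treca}. You have spelled out explicitly how these three results combine, including the verification that $l=k$ via the form of the segments in Theorem~\ref{classification-discrete}~$iii)$, a point the paper leaves implicit through the Remark after~(C1)--(C2).
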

\begin{proof}
	Follows from the Theorem \ref{glavni}, Proposition \ref{motivation} and Proposition \ref{treca}. 
\end{proof}

%\Acknowledgements{If necessary your acknowledgements enter here.}
%\newpage
%\begin{thebibliography}{M{\oe}gTad02}

\end{document}